\definecolor{mygray}{gray}{0.85}
\renewcommand{\leq}{\leqslant}
\renewcommand{\geq}{\geqslant}
\def\subsection{\@startsection{subsection}{3}%
  \z@{.5\linespacing\@plus.7\linespacing}{.3\linespacing}%
  {\bfseries\centering}}
\def\subsubsection{\@startsection{subsubsection}{3}%
  \z@{.5\linespacing\@plus.7\linespacing}{.3\linespacing}%
  {\centering}}
\def\myfnt{\ifx\protect\@typeset@protect\expandafter\footnote\else\expandafter\@gobble\fi}
\newtheorem{theorem}{Theorem}[section]
\newtheorem{corollary}[theorem]{Corollary}
\newtheorem{definition}[theorem]{Definition}
\newtheorem{lemma}[theorem]{Lemma}
\newtheorem{example}[theorem]{Example}
\newtheorem{fact}[theorem]{Fact}
\newtheorem{strategy}[theorem]{Strategy}
\newtheorem{remark}[theorem]{Remark}
\newtheorem{notation}[theorem]{Notation}
\newtheorem*{1.1theorem}{Theorem 1.1}
\newtheorem*{1.2theorem}{Theorem 1.2}
\newtheorem*{1.1.1claim}{\normalfont \emph{Claim 1.1.1}}
\newtheorem*{2.20.1claim}{\normalfont \emph{Claim 2.20.1}}
\newtheorem*{2.20.2claim}{\normalfont \emph{Claim 2.20.2}}
\newtheorem*{4.4.1claim}{\normalfont \emph{Claim 4.4.1}}
\newcommand{\proofclaim}{\emph{Proof of Claim}}
\begin{document}
	
	\begin{abstract} In \cite{sklinos} Sklinos proved that any uncountable free group is not $\aleph_1$-homogeneous. This was later generalized by Belegradek in \cite{bele} to torsion-free residually finite relatively free groups, leaving open whether the assumption of residual finiteness was necessary. In this paper we use methods arising from the classical analysis of relatively free groups in infinitary logic to answer Belegradek's question in the negative. Our methods are general and they also apply to varieties with torsion, for example we show that if $V$ contains a finite non-nilpotent group, then any \mbox{uncountable $V$-free group is not $\aleph_1$-homogeneous.}
	\end{abstract}
	
	\title[Non homogeneity of relatively free groups]{The construction principle and non homogeneity of uncountable relatively free groups}
	
	
	\thanks{The second named author was supported by project PRIN 2022 ``Models, sets and classifications", prot. 2022TECZJA and by INdAM Project 2024 (Consolidator grant) ``Groups, Crystals and Classifications”. We would like to thank R. Sklinos for pointing out to us the work of Belegradek \cite{bele} which motivated and inspired the writing of the present paper.}
	
	\author{Davide Carolillo}
	
	\address{Department of Mathematics ``Giuseppe Peano'', University of Torino, Via Carlo Alberto 10, 10123, Italy.}
	\email{davide.carolillo@unito.it}
	
	\author{Gianluca Paolini}

	\address{Department of Mathematics ``Giuseppe Peano'', University of Torino, Via Carlo Alberto 10, 10123, Italy.}
	\email{gianluca.paolini@unito.it}

	\date{\today}
	\maketitle
	
	
	\newcommand{\mbf}{\mathbf}
	\newcommand{\msc}{\mathcal}
	\newcommand{\mbb}{\mathbb}
	\newcommand{\op}{\operatorname}
	\newcommand{\join}{\vee}
	\renewcommand{\Join}{\bigvee}
	\newcommand{\meet}{\wedge}
	\newcommand{\Meet}{\bigwedge}
	\newcommand{\la}{\langle}
	\newcommand{\ra}{\rangle}
	\newcommand{\ol}{\overline}
	\newcommand{\ul}{\underline}
	\newcommand{\qimp}{\rightarrow}
	\newcommand{\onto}{\twoheadrightarrow}
	\newcommand{\wavy}{\approx}
	\newcommand{\DD}{\,\ol{\ul{\op{D}}}\,}
	
	\newcommand{\tp}{\text{\normalfont tp}}
	\newcommand{\fvo}{F_V(\omega_1)}
	\newcommand{\fvl}{F_V(\lambda)}
	\newcommand{\fvk}{F_V(\kappa)}
	\newcommand{\varv}{V}
	\newcommand{\wordsv}{\mathbf{V}}
	\newcommand{\varw}{W}
	\newcommand{\wordsw}{\mathbf{W}}
	\newcommand{\varab}{\mathrm{Ab}}
	\newcommand{\varg}{\mathrm{Grp}}
	\newcommand{\cat}{\text{\normalfont Cat}}
	\newcommand{\inc}{I}
	\newcommand{\Hom}{\text{\normalfont Hom}}
	\newcommand{\catC}{\mathcal{C}}
	\newcommand{\catD}{\mathcal{D}}
	\newcommand{\p}{{\normalfont\textbf{P}}}
	\newcommand{\cp}{{\normalfont\textbf{CP}}}
	\newcommand{\cspace}{~}
	\newcommand{\say}[1]{``#1''}
	
	\section{Introduction}
	
	Given a cardinal $\kappa$ and a structure $M$ of power $\geq \kappa$, we say that $M$ is  $\kappa$-homogeneous (or $\kappa$-type-homogeneous) if whenever two tuples of elements from $M$ of length $<\kappa$ have the same first-order type in $M$, then they are automorphic. Recently, the question of homogeneity received considerable attention in the model theoretic community, in particular in the context of groups. A fundamental result in this direction is the work of Perin and Sklinos \cite{perin} and, independently, of Houcine \cite{houcine} showing that any non-abelian free group is $\aleph_0$-homogeneous, thus answering a question of Sela. Again Sklinos, in \cite{sklinos}, proved that any uncountable free group is {\em not} $\aleph_1$-homogeneous. In his proof of this latter fact Sklinos used methods from stability theory, in particular the theory of forking, thus relying on the fundamental result of Sela that the theory of non-abelian free groups is stable \cite{sela}. This work was later continued by Belegradek in \cite{bele}, where he showed that on one hand more elementary methods were already sufficient to obtain non $\aleph_1$-homogeneity, and on the other hand that the proof applied to relatively free groups in any torsion-free variety of groups whose free objects were residually finite. The point of the assumption of residual finiteness in Belegradek's work is the fact that, by a classical result of Maltsev, any finitely generated residually finite group is Hopfian, and this latter fact is a key ingredient of Belegradek's proof. On the other hand, as already observed by Belegradek in \cite{bele}, there exist  torsion-free varieties of groups in which all relatively free groups
	of rank more than one are non-Hopfian \cite{hopfian}. Consequently, Belegradek in \cite[pg.\cspace783]{bele} asks whether the assumption of residual finiteness is necessary to conclude non $\aleph_1$-homogeneity of an uncountable torsion-free relatively free group. This is the motivating question of the present paper, \mbox{which we solve in Theorem~\ref{main_cor}.}
	
	\medskip
	
	The core of our approach to the solution of Belegradek's question is the use of some technology developed in the context of infinitary logic in dealing with the classical problem of logical characterization of the free groups in a variety $V$, or, from a more algebraic point of view, the problem of existence of $V$-$\aleph_1$-free groups which are not $V$-free, where we recall that given a variety of groups $V$, we say that $G \in V$ is $V$-$\aleph_1$-free whenever every countable subgroup of $G$ is $V$-free. The literature on the subject is is extensive (see e.g. \cite{eklof,mekler2}). Some stepping stones are:
	\begin{enumerate}[(1)]
		\item there exists a free group which is $\aleph_1$-free but not free \cite{higman}\footnote{At the best of our knowledge,\cspace\cite{higman} is the first reference on the subject of almost-free groups.};
		\item there exists an abelian group which is $\aleph_1$-free but not free abelian \cite{mekler1};
		\item the generalization of the above to any torsion-free variety of groups $V$ \cite{eklof, pope1984}.
	\end{enumerate}
	
	\smallskip
	
	Behind all these constructions there is a combinatorial principle known as {\em the construction principle} $(\cp)$ (cf. Definition\cspace\ref{definition CP}), and in fact large part of the literature quoted above deals  with the analysis of this principle and with the determination of which varieties $V$ of algebras satisfy the construction principle when relativized to  $V$. The crucial point of this paper is the realization that this combinatorial principle is most relevant for the problem of homogeneity. We prove: 
	\begin{theorem}\label{theorem V satisfies (CP) -> Fv(k) not homoheneous} Let $V$ be a variety of groups. If $(\cp)$ holds in $\varv$, then for every uncountable $\kappa$, $F_V(\kappa)$ is not $\aleph_1$-homogeneous. Furthermore, $F_V(\aleph_0)$ has an elementary subgroup which is not a $V$-free factor of $F_V(\aleph_0)$.
	\end{theorem}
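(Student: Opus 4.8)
The plan is to derive both assertions from the ``furthermore'' clause: I would construct the countable witness first and then push it up to uncountable rank.

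\medskip

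\emph{Step 1 (the countable witness).} Using $(\cp)$ in $\varv$, I would produce a copy of $F_V(\aleph_0)$ together with a proper subgroup $H$ such that (i) $H$ is not a $\varv$-free factor of $F_V(\aleph_0)$, yet (ii) the inclusion $H\hookrightarrow F_V(\aleph_0)$ is elementary, realized by an isomorphism $\phi\colon F_V(\aleph_0)\to H$ that is elementary onto $H$. The construction principle is exactly the combinatorial device that assembles a $\varv$-free group out of countable free pieces so that a canonically defined subgroup fails to split off; the extra content I need is that $\phi$ can be chosen to witness $H\preceq F_V(\aleph_0)$. I expect to get this by a back-and-forth argument, using that $(\cp)$ makes every finite configuration of the approximations sit inside a genuine free factor, which is enough to amalgamate $F_V(\aleph_0)$ with its image $H$ while matching the distinguished bases. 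Clause (i) is where the novelty lies: this is precisely the point at which Belegradek invoked the Hopf property, whereas here non-splitting should be read off directly from the construction.

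\medskip

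\emph{Step 2 (same type, not automorphic, in the countable picture).} Granting Step 1, let $\bar a$ enumerate a basis of $F_V(\aleph_0)$ and set $\bar b=\phi(\bar a)$, a generating tuple of $H$. For every formula $\psi$ we have $F_V(\aleph_0)\models\psi(\bar a)$ iff $H\models\psi(\bar b)$ (as $\phi$ is an isomorphism onto $H$) iff $F_V(\aleph_0)\models\psi(\bar b)$ (as $H\preceq F_V(\aleph_0)$), so $\bar a$ and $\bar b$ have the same type. They are not automorphic: an automorphism carrying $\bar a$ to $\bar b$ would send $\la\bar a\ra=F_V(\aleph_0)$ onto $\la\bar b\ra=H$, forcing $H=F_V(\aleph_0)$ and contradicting (i). This already isolates the phenomenon, but $F_V(\aleph_0)$ is countable, so it does not yet refute $\aleph_1$-homogeneity.

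\medskip

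\emph{Step 3 (transfer to uncountable $\kappa$).} Fix uncountable $\kappa$ and write $\fvk=F_V(\aleph_0)\ast_V F_V(\kappa)$, realizing $G:=F_V(\aleph_0)$ as an infinite-rank $\varv$-free factor. I would invoke a lemma (which I expect to be isolated separately) that an \emph{infinite-rank} $\varv$-free factor is an elementary subgroup, giving $G\preceq\fvk$; combined with Step 1 and transitivity of $\preceq$ this yields $H\preceq G\preceq\fvk$, so $\bar a$ and $\bar b$ retain the same type now computed in $\fvk$. They stay non-automorphic in $\fvk$: an automorphism sending $\bar a\mapsto\bar b$ would map the free factor $G=\la\bar a\ra$ onto $H=\la\bar b\ra$, making $H$ a $\varv$-free factor of $\fvk$, which I would exclude by descending free-factor information back to $G$ (a modular-law argument for free factors, using $H\leq G$) to contradict (i). This produces two countable tuples of $\fvk$ with equal type but lying in distinct $\mathrm{Aut}(\fvk)$-orbits, i.e. $\fvk$ is not $\aleph_1$-homogeneous.

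\medskip

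The main obstacle I anticipate is the conjunction of (i) and (ii) in Step 1: extracting from $(\cp)$ a single countable object whose subgroup is simultaneously elementary and not a free factor. Making the inclusion elementary forces $H$ to be indistinguishable from $F_V(\aleph_0)$ by first-order logic, while non-splitting must survive this, and reconciling the two is exactly where $(\cp)$ must replace residual finiteness or the Hopf property. A secondary but genuine difficulty is the free-factor bookkeeping of Step 3 (that infinite-rank $\varv$-free factors are elementary, and that a non-free-factor of $G$ cannot become a free factor after the $\varv$-free product with $F_V(\kappa)$), which I would address through the structure theory of $\varv$-free products rather than any stability-theoretic input.
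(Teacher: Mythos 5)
Your overall architecture (equal types via finite free-factor approximations, non-automorphic via a free-factor obstruction) matches the paper's, but Step 3 contains a genuine gap at exactly the point where all the work lies. You need to pass from ``$H$ is a $\varv$-free factor of $F_\varv(\kappa)$'' back down to a contradiction with the $(\cp)$ data, and you propose to do this by ``a modular-law argument for free factors, using $H\leq G$''. No such modular law is available for an arbitrary variety: the statement that $H\leq G\leq_* F$ and $H\leq_* F$ imply $H\leq_* G$ is essentially a Kurosh-type descent, and nothing in the paper's toolkit (or in general variety theory) provides it; indeed the entire point of the paper's argument is to avoid needing it. What the paper does instead is the following refinement: from $A\leq_* F$ and $B=\alpha(A)\leq_* F$ it extracts (via Lemmas~\ref{lemma existence of V-free complementary factor} and \ref{lemma characterization of f.f.}) bases $X'\supseteq X$ and $Y'\supseteq Y$ of $F$, and then builds by an $\omega$-stage interleaving a \emph{countable} $C\leq_* F$ that is generated both by a subset $\widetilde{X}\subseteq X'$ containing $X$ and by a subset $\widetilde{Y}\subseteq Y'$ containing $Y$; this simultaneously forces $A\leq_* C$, $B\leq_* C$, and $C\cong A*_\varv F_\varv(\aleph_0)$ by an isomorphism fixing $B$, contradicting clause \emph{(2)} of $(\cp)$ directly. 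Your proposal contains no substitute for this interleaving construction, and without it Step 3 does not close.

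A secondary issue is the order of deduction. You build the countable witness first (Step 1) and claim $H\preceq F_\varv(\aleph_0)$ by an unspecified back-and-forth; in the paper the elementarity of $B$ in $A$ is itself \emph{derived from} the uncountable picture, via $\tp^{A}(\bar{y})=\tp^{F}(\bar{y})=\tp^{F}(\bar{x})=\tp^{A}(\bar{x})=\tp^{B}(\bar{y})$, where the middle equality is proved using automorphisms of $F=F_\varv(\kappa)$ that move only finitely many basis elements. Note that $(\cp)$ only gives $B_n\leq_* A*_\varv F_\varv(\aleph_0)$, not $B_n\leq_* A$, so a purely countable back-and-forth inside $A$ is not obviously available; the safe route is the paper's, which proves the uncountable statement first and reads off the ``furthermore'' afterwards.
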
 

	Theorem~\ref{theorem V satisfies (CP) -> Fv(k) not homoheneous} gives us a new technical tool to prove non $\aleph_1$-homogeneity of $V$-free groups which does not require residual finiteness, and which opens the way for a number of new applications. In this perspective, an analysis of \cite{mekler_groups} allows us to conclude the following, thus in particular answering Belegradek's question:
	
	\begin{theorem}\label{main_cor} Let $V$ be a variety of groups satisfying at least one the following:
		\begin{enumerate}[(1)]
			\item $V$ is torsion-free;
			\item $V$ is not nilpotent but locally nilpotent;
			\item $V$ contains a finite non-nilponent group.
		\end{enumerate}
		Then, for every $\kappa \geq \aleph_1$ we have that $F_\varv(\kappa)$ is not $\aleph_1$-homogeneous. Furthermore, $F_V(\aleph_0)$ has an elementary subgroup which is not a $V$-free factor of $F_V(\aleph_0)$. 
	\end{theorem}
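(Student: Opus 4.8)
The plan is to derive Theorem~\ref{main_cor} from Theorem~\ref{theorem V satisfies (CP) -> Fv(k) not homoheneous} by verifying, in each of the three cases, that the variety $V$ satisfies the construction principle $(\cp)$. Once $(\cp)$ holds in $V$, both conclusions---non $\aleph_1$-homogeneity of $F_V(\kappa)$ for all uncountable $\kappa$, and the existence of an elementary non-$V$-free-factor subgroup of $F_V(\aleph_0)$---follow immediately from the theorem, so the entire content of the corollary is the combinatorial verification of $(\cp)$. The overall strategy is therefore to reduce everything to the classification of which varieties satisfy $(\cp)$ and then to quote, in each case, the relevant result from the almost-free groups literature.

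For case (1), $V$ torsion-free, the plan is to invoke the classical results establishing $(\cp)$ for arbitrary torsion-free varieties, namely the work of Eklof and of Pope cited in the excerpt as \cite{eklof, pope1984} (item (3) of the ``stepping stones'' list). These references already show the existence of $V$-$\aleph_1$-free groups that are not $V$-free, which is precisely the phenomenon $(\cp)$ encapsulates; I would isolate from them the statement that $(\cp)$ holds in every torsion-free $V$. This case simultaneously recovers Belegradek's setting while dropping his residual-finiteness hypothesis, since we never invoke Hopficity---that was the whole point of routing through $(\cp)$ rather than through Maltsev's theorem.

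For cases (2) and (3), the plan is to appeal to the analysis of varieties with torsion carried out by Mekler in \cite{mekler_groups}, announced in the paragraph preceding the statement. The key extraction is that if $V$ is not nilpotent but locally nilpotent, or if $V$ contains a finite non-nilpotent group, then $V$ still satisfies $(\cp)$. I expect Mekler's work to provide exactly the combinatorial input: a suitable family of partial $V$-free structures that can be amalgamated and diagonalized to produce an $\aleph_1$-free non-free object, witnessing $(\cp)$. The main obstacle will be case (3), the finite non-nilpotent case: here one must handle torsion directly and cannot lean on any torsion-free amalgamation, so verifying that the presence of a single finite non-nilpotent group in $V$ suffices to run the construction is the technically delicate point. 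The non-nilpotency is what prevents the degenerate behavior (in a nilpotent variety relatively free groups are too rigid for $(\cp)$), and translating Mekler's hypotheses into a clean statement that $(\cp)$ holds is where the careful bookkeeping lies.

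Assembling these three verifications, I would conclude: in each of cases (1)--(3) the variety $V$ satisfies $(\cp)$, whence Theorem~\ref{theorem V satisfies (CP) -> Fv(k) not homoheneous} applies verbatim to give that $F_V(\kappa)$ is not $\aleph_1$-homogeneous for every $\kappa \geq \aleph_1$ and that $F_V(\aleph_0)$ has an elementary subgroup which is not a $V$-free factor. The cleanest presentation is to state a single intermediate lemma---``under any of (1), (2), (3), $(\cp)$ holds in $V$''---proved by the case analysis above, and then deduce the corollary in one line from the main theorem.
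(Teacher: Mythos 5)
Your proposal is correct and follows essentially the same route as the paper: reduce everything to verifying $(\cp)$ in each of the three cases and then invoke Theorem~\ref{theorem V satisfies (CP) -> Fv(k) not homoheneous}, with case (1) handled via the torsion-free literature (reducing to $\varab\subseteq\varv$) and cases (2)--(3) via Mekler's analysis in \cite{mekler_groups}. The only difference is one of explicitness: the paper exhibits the concrete witnesses ($y_i=x_ix_{i+1}^{-2}$, resp.\ $y_i=x_{2i}^{-1}[x_{2i+1},x_{2i+2}]$), checks $B_n\leq_* A$ via Lemma~\ref{lemma H_n f.f. K}, and in the torsion-free case verifies directly that $C_\varab\cong\langle 1/2^i : i<\omega\rangle_{\mathbb{Q}}$ is $2$-divisible and hence not embeddable in a free abelian group, rather than citing the statement wholesale.
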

	
	Notice in particular that Theorem~\ref{main_cor}\,\emph{(2)-(3)} deal also with varieties with torsion, and so our methods are much more general than what is needed to solve Belegradek's question. On the other hand, clearly not all varieties of groups $V$ are such that the uncountable $V$-free groups are not $\aleph_1$-homogeneous, for example, as well-known and also mentioned in \cite[Remark~2]{bele}, in any variety $\varv$ of abelian groups of exponent $n$ for $2 \leq n < \omega$ we have that the $V$-free groups are saturated and thus homogeneous.
	Finally, we believe that our methods extend naturally to any variety of algebras which satisfy the construction principle.
	
	\smallskip\noindent
	The authors wish to thank the referee for their valuable suggestions, which have significantly improved the final version of this paper.
\section{Preliminaries}
Let $L$ be a first-order language, $M$ be an $L$-structure and $\bar{a}$ be a tuple of elements of $\text{dom}(M)$, not necessarily finite. The \emph{type} of $\bar{a}$ in $M$, denoted by $\tp^M(\bar{a})$, is the set of all first-order $L$-formulas that $\bar{a}$ realizes in $M$. 
\begin{definition}
	Let $\kappa$ be a cardinal. An $L$-structure $M$ is said to be $\kappa$\emph{-type-homogeneous} $($or simply $\kappa$\emph{-homogeneous}$)$ if and only if, for any tuples $\bar{a}$ and $\bar{b}$ in $M$ s.t. $\tp^A(\bar{a})=\tp^A(\bar{b})$, and $|\bar{a}|,|\bar{b}|<\kappa$, there exists some $\alpha\in\text{\normalfont Aut}(M)$ which maps $\bar{a}$ into $\bar{b}$. If $\kappa=|M|$, we just say that $M$ is \emph{type-homogeneous} $($or \emph{homogeneous}$)$.
\end{definition}
\begin{remark}
	Observe that $M$ is $\kappa$-type-homogeneous iff, for every partial elementary map $\sigma$ on $M$ s.t. $\text{dom}(\sigma)<\kappa$, there exists some $\alpha\in\text{\normalfont Aut}(M)$ s.t. $\sigma\subseteq\alpha$. 
\end{remark}
For the rest of this paper, $L=\{\,\cdot\,,(\,\text{-}\,)^{-1},e\}$ will be the language of groups. Further details about homogeneous structures are available in \cite{marker}. A detailed introduction to group varieties is given in \cite{neumann}. A \emph{variety} of groups $V$ is a class of groups defined equationally, i.e., the class of models of the $L$-theory obtained by adding to the theory of groups the universal closures of some positive $L$-atomic formulas. Any variety $V$ uniquely yields a family of positive universal sentences $\forall \bar{x} \big(w(\bar{x})=e\big)$, which are satisfied by all groups in $\varv$, where $w(\bar{x})$ are reduced words\footnote{I.e. terms not containing expressions of the form $xx^{-1}$, or $x^{-1}x$, for some variable $x$ in $\bar{x}$.} of the language of groups, called \emph{laws} of $\varv$. We denote varieties with capital letters ($\varv,\varw,\ldots$), and the corresponding sets of laws with boldface letters ($\wordsv,\wordsw,\ldots$).
\begin{definition}\label{definition V-basis}
	Let $\varv$ be a variety of groups and let $A\in \varv$. A subset $X$ of $A$ is said to be $\varv$-\emph{independent} iff for any group word $w(\bar{z})$ and $\bar{x}\in X^{|\bar{z}|}$ the equality $w(\bar{x})=e$ holds in $A$ just in case $w(\bar{z}\,)\in\wordsv$. If $A\in\varv$ is generated by a $\varv$-independent subset $X$, we say that $X$ is a $\varv$-\emph{basis} of $A$ and that $A$ is $\varv$-\emph{free} $($or simply \emph{free}\footnote{Sometimes, these groups are referred to in the literature as \emph{relatively free}. Here, we prefer a more informative expression.} \emph{in }$\varv$$)$.
\end{definition}
\noindent
Of course, these concepts depend on $\varv$: for instance, $\mathbb{Z}^2$ is free in the variety of abelian groups $\varab$, but not in the variety of all groups $\varg$. According to \cite{neumann}, we will refer to groups which are free in $\varg$ as \emph{absolutely free} groups.
\begin{remark}\label{remark: universal property}
A group $A\in \varv$ is $\varv$-free with basis $X$ if and only if it satisfies the following diagram property, known as \emph{the Universal Property (for $\varv$):} for all $B\in \varv$, any function from $X$ to $B$ extends uniquely to \mbox{a homomorphism from $A$ to $B$.}
\end{remark}
\noindent

\smallskip\noindent
It is known that any two $\varv$-bases of a $\varv$-free group $A$ have the same cardinality (cf. 13.53 of \cite{neumann}). This cardinality is called \emph{the rank of }$A$. For any positive cardinal $\kappa$, there is a $\varv$-free group of rank $\kappa$, which is unique up to isomorphism. We denote this group as $F_\varv(\kappa)$, or simply $F(\kappa)$ for the absolutely free group of rank $\kappa$.

\smallskip\noindent
Every variety $\varv$ naturally yields a category $\text{Cat}(\varv)$, whose arrows are group homomorphisms, and whose domain is $\varv$ itself. In this sense, Remark \ref{remark: universal property} just states that a $\varv$-free group is a free object of the category $\text{Cat}(\varv)$.
\begin{remark}\label{remark bijection of V-bases yields an automorphism of a V-free algebra}
	A direct consequence of the Universal Property for $\varv$ is that any bijection between two $\varv$-bases $X$ and $Z$ of a $\varv$-free group $A$ uniquely yields an automorphism of $A$ identifying $X$ and $Z$.
\end{remark}
\smallskip\noindent
It is well-known (c.f.\cspace\cite{lyndon-schupp}, pg.\,174-175) that free products admit characterization in terms of a diagram property. Arguing as in \cite{neumann} (Def.\cspace18.11), we extend this notion to the case of a generic variety: this characterization will be useful for our purpose\footnote{Formally, Definition\cspace\ref{definition V-free product} is not the one which appears in \cite{neumann}, but it is equivalent to it.}.
\begin{definition}\label{definition V-free product}
	Let $\varv$ be a variety of groups. A $\varv$-group $A$ is said to be the $\varv$-\emph{free product} of its subgroups $B$ and $C$ if and only if it is generated by $B$ and $C$ and, for any group $D\in \varv$ and any homomorphisms $\beta:B\rightarrow D$, and $\gamma:C \rightarrow D$, there is a unique homomorphism $\alpha:A \rightarrow D$ s.t. $\alpha\restriction_B=\beta$ and $\alpha\restriction_C=\gamma$. If this is the case, we also say that $B$ is a $\varv$-\emph{free factor} of $A$, that $C$ is \mbox{a \emph{complementary factor} of $B$ in $A$, and we write $B \leq_* A$.}
\end{definition}
\noindent
In categorical terms, Definition\cspace\ref{definition V-free product} states that the $\varv$-free product is the coproduct of the category $\cat(\varv)$. Recurring examples of such products in literature are the free product, and the direct sum, which are the coproducts of the categories $\text{Cat}(\varg)$ and $\cat(\varab)$, respectively. Just as in these cases, Definition\cspace\ref{definition V-free product} naturally extends to an arbitrary number of $\varv$-free factors, although we will almost always consider only the product of two factors.

\smallskip\noindent
The property satisfied by $A$, $B$ and $C$ in Definition\cspace\ref{definition V-free product} is known in Category Theory as Universal Property. In the following arguments, it will often occur in combination with the Universal Property of Remark\cspace\ref{remark: universal property}. So, in order to avoid confusion, we will simply refer to it as \say{diagram property}. 

\smallskip\noindent
We are primarily interested in the study of $\varv$-free factors as they provide a natural criterion for distinguishing between subgroups that are well-embedded in an ambient $\varv$-free group, and subgroups which are not. In the first case we will be able to extend any automorphism of the subgroup to an automorphism of the whole ambient group, in the second case this will not be possible in general. We formalize this statement in the following lemmas.
\begin{notation}
	We denote by \say{$\leq$} the subgroup relation. If $B\leq A$ are groups, and $S$ is a subset of $A$, then we let:
	\begin{enumerate}[(1)]
		\item $\langle S\rangle_A$ be the subgroup of $A$ generated by $S$;
		\item $N_A(B)=\langle aba^{-1}:a\in A,\,b\in B\rangle_A$ be the normal closure of $B$ in $A$;
		\item $\text{\normalfont id}_A$ be the identity function on $A$.
	\end{enumerate}
\end{notation}
\noindent
First, we recall a direct consequence of the Diagram Property of $\varv$-free products, which corresponds in our setting to Property\cspace$(2)$ of \cite{bele}.
\begin{lemma}\label{lemma extension automorphisms of coproducts}
	Let $\varv$ be a variety of groups and let $A\in\varv$. If $B$ is a subgroup of $A$ s.t. $B\leq_* A$, then any automorphism of $B$ extends to an automorphism of $A$.
\end{lemma}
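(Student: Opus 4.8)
The plan is to read off the conclusion directly from the diagram property of the $\varv$-free product. Fix $\sigma \in \text{\normalfont Aut}(B)$, and since $B \leq_* A$, let $C$ be a complementary factor of $B$ in $A$, so that $A$ is the $\varv$-free product of $B$ and $C$. First I would apply the diagram property of Definition~\ref{definition V-free product} to the two homomorphisms $B \hookrightarrow A$ precomposed with $\sigma$ (call it $\beta : B \to A$) and the inclusion $\gamma = \text{\normalfont id}_C : C \to A$: this yields a unique homomorphism $\alpha : A \to A$ with $\alpha\restriction_B = \sigma$ and $\alpha\restriction_C = \text{\normalfont id}_C$. Running the same construction with $\sigma^{-1}$ in place of $\sigma$ produces a homomorphism $\alpha' : A \to A$ with $\alpha'\restriction_B = \sigma^{-1}$ and $\alpha'\restriction_C = \text{\normalfont id}_C$.

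The substantive step is then to verify that $\alpha$ is an automorphism with inverse $\alpha'$. I would observe that the composite $\alpha' \circ \alpha$ is an endomorphism of $A$ whose restriction to $B$ is $\sigma^{-1} \circ \sigma = \text{\normalfont id}_B$ and whose restriction to $C$ is $\text{\normalfont id}_C$; symmetrically for $\alpha \circ \alpha'$. Now $\text{\normalfont id}_A$ is also an endomorphism of $A$ restricting to the identity on both $B$ and $C$. Here is where the uniqueness clause of the diagram property does the work: an endomorphism of $A$ is determined by its restrictions to $B$ and to $C$, so the endomorphism fixing both factors pointwise is unique and must therefore equal $\text{\normalfont id}_A$. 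Hence $\alpha' \circ \alpha = \alpha \circ \alpha' = \text{\normalfont id}_A$, so $\alpha \in \text{\normalfont Aut}(A)$ and $\alpha$ extends $\sigma$, as desired.

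I do not expect a genuine obstacle: the entire argument is formal, being nothing more than the statement that the coproduct inclusions force an endomorphism fixing both factors to be the identity. The only point deserving care is to make sure that the uniqueness in Definition~\ref{definition V-free product} is being applied to the correct pair of restriction data (the identity maps on $B$ and $C$), so that the two composites are genuinely identified with $\text{\normalfont id}_A$ rather than merely agreeing with it on a generating set without further justification. Since $A$ is generated by $B$ and $C$, even the latter weaker route would suffice, but invoking uniqueness keeps the proof cleanly at the level of the universal property.
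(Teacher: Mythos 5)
Your proof is correct and follows essentially the same route as the paper: decompose $A$ as the $\varv$-free product of $B$ and a complementary factor $C$, use the diagram property to extend $\sigma$ and $\sigma^{-1}$ (composed with the inclusions) to endomorphisms $\alpha,\alpha'$ of $A$, and invoke the uniqueness clause to identify both composites with $\mathrm{id}_A$. The only cosmetic difference is that the paper extends an arbitrary pair $(\beta,\gamma)\in\mathrm{Aut}(B)\times\mathrm{Aut}(C)$ whereas you fix $\gamma=\mathrm{id}_C$, which is all the lemma requires.
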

\begin{proof}
	Let $C$ be a $\varv$-free complementary factor of $B$ in $A$. By Definition \ref{definition V-free product}, every pair of automorphisms $\beta\in\text{Aut}(B)$ and $\gamma\in\text{Aut}(C)$ uniquely yields an endomorphism $\alpha:A\rightarrow A$ s.t. $\alpha\restriction_B={\beta_\star}$, and $\alpha\restriction_C={\gamma_\star}$, with ${\beta_\star}$ and ${\gamma_\star}$ denoting the compositions of $\beta$ and $\gamma$ with the natural inclusions of $B$ and $C$ in $A$, respectively.
		
		\smallskip\noindent
		To see that $\alpha$ is an automorphism, it suffices to repeat the previous argument on $\beta^{-1}\in\text{Aut}(B)$ and $\gamma^{-1}\in\text{Aut}(C)$. This gives a homomorphism $\alpha':A\rightarrow A$ s.t. $\alpha'\restriction_B=({\beta^{-1}})_\star$ and $\alpha'\restriction_C=(\gamma^{-1})_\star$, where, as before, $(\beta^{-1})_\star$ and $(\gamma^{-1})_\star$ denote the compositions of $\beta^{-1}$ and $\gamma^{-1}$ with the inclusions of $B$ and $C$ in $A$, respectively. By construction, the following equalities hold:
		\begin{equation*}
			(\alpha\restriction_B\circ\,\alpha'\restriction_B)=\text{id}_B,\quad(\alpha\restriction_C\circ\,\alpha'\restriction_C)=\text{id}_C.
		\end{equation*}
		\noindent
		Since these conditions are realized also by the identity on $A$, $\alpha'$ must be a right inverse of $\alpha$ (\,i.e. $\alpha\circ\alpha'=\text{id}_A$). By symmetry, we can show that it is a left inverse as well. So, $\alpha'=\alpha^{-1}$, and $\alpha$ is an automorphism of $A$, as desired.
\end{proof}
\noindent
A direct consequence of Definition\cspace\ref{definition V-free product} and the Universal Property of $\varv$-free groups is that the $\varv$-free products of $\varv$-free groups is still a $\varv$-free group with a $\varv$-basis given by the union of the $\varv$-bases of the factors. This observation allows us to give the following useful characterization of Definition \ref{definition V-free product} in terms of $\varv$-bases of $\varv$-free groups.
\begin{lemma}\label{lemma characterization of f.f.}
	Let $\varv$ be a variety of groups, and $A\in\varv$ be $\varv$-free. Then, for every $\varv$-free subgroup $B$ of $A$ the following are equivalent:
	\begin{enumerate}[(1)]
		\item $B\leq_*A$ and there is a $\varv$-free complementary factor of $B$ in $A$;
		\item there is a $\varv$-basis of $B$ extending to a $\varv$-basis of $A$;
		\item every $\varv$-basis of $B$ extends to a $\varv$-basis of $A$.
	\end{enumerate}
\end{lemma}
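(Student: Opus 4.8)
The plan is to prove the cyclic chain of implications $(3) \Rightarrow (2) \Rightarrow (1) \Rightarrow (3)$, leaning on the Universal Property of $\varv$-free groups (Remark~\ref{remark: universal property}) and on the observation, recalled just before the statement, that the $\varv$-free product of two $\varv$-free groups is again $\varv$-free with a $\varv$-basis given by the union of the bases of the factors. The implication $(3) \Rightarrow (2)$ is immediate, since $B$ is $\varv$-free and therefore possesses at least one $\varv$-basis (the empty set, should $B$ be trivial), which by $(3)$ extends to a $\varv$-basis of $A$.

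For $(2) \Rightarrow (1)$, I would start from a $\varv$-basis $Y$ of $B$ that extends to a $\varv$-basis $X$ of $A$, set $Z := X \setminus Y$, and put $C := \langle Z \rangle_A$. Since $Z$ is a subset of the $\varv$-independent set $X$, it is itself $\varv$-independent, so $C$ is $\varv$-free with $\varv$-basis $Z$ and is the candidate complementary factor. To verify the diagram property of Definition~\ref{definition V-free product}, take any $D \in \varv$ and homomorphisms $\beta : B \to D$, $\gamma : C \to D$, and define $f : X \to D$ to agree with $\beta$ on $Y$ and with $\gamma$ on $Z$. The Universal Property applied to the $\varv$-basis $X$ produces a unique homomorphism $\alpha : A \to D$ extending $f$; since $\alpha$ and $\beta$ are homomorphisms coinciding on the generating set $Y$ of $B$, they coincide on all of $B$, and likewise $\alpha \restriction_C = \gamma$. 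Uniqueness of such an $\alpha$ is inherited from the uniqueness clause of the Universal Property, because any homomorphism restricting to $\beta$ on $B$ and to $\gamma$ on $C$ must agree with $f$ on $X = Y \cup Z$. This shows that $A$ is the $\varv$-free product of $B$ and $C$, i.e. $B \leq_* A$ with $\varv$-free complementary factor $C$, which is exactly $(1)$.

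For $(1) \Rightarrow (3)$, let $C$ be a $\varv$-free complementary factor of $B$ in $A$, fix an arbitrary $\varv$-basis $Y$ of $B$, and choose a $\varv$-basis $W$ of $C$. As $A$ is the $\varv$-free product of the $\varv$-free groups $B$ and $C$, the observation recalled above identifies $A$ as $\varv$-free with $\varv$-basis $Y \cup W$; in particular $Y$ extends to a $\varv$-basis of $A$. Since $Y$ was arbitrary, every $\varv$-basis of $B$ extends, giving $(3)$.

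I expect none of the three steps to be genuinely hard, the content being a repeated application of the two universal properties in play. The point demanding the most care is the verification in $(2) \Rightarrow (1)$: one must keep apart the Universal Property of $\varv$-free groups, used to construct and pin down $\alpha$ from the basis $X$, and the diagram property of $\varv$-free products, which is the conclusion being established; and one must argue the equalities $\alpha \restriction_B = \beta$ and $\alpha \restriction_C = \gamma$ through the principle that homomorphisms agreeing on a generating set agree everywhere, rather than assuming them outright.
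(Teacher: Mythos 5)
Your proposal is correct and follows essentially the same route as the paper: the same cyclic chain $(3)\Rightarrow(2)\Rightarrow(1)\Rightarrow(3)$, with $C:=\langle X\setminus Y\rangle_A$ as the complementary factor in $(2)\Rightarrow(1)$ verified via the Universal Property of the basis $X$, and $(1)\Rightarrow(3)$ obtained by checking that the union of bases of the two factors is a basis of $A$ (which the paper spells out directly rather than citing as a prior observation). The only cosmetic difference is that you justify $\alpha\restriction_B=\beta$ by the fact that homomorphisms agreeing on a generating set coincide, where the paper invokes the uniqueness clause of the Universal Property of $B$; these are the same argument.
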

\begin{proof}
	Implication $[3\Rightarrow2]$ is immediate. For $[2\Rightarrow1]$, let $Y$ be a $\varv$-basis of $B$ and $X$ be a $\varv$-basis of $A$ s.t. $Y\subseteq X$. We claim that $B\leq_*A$ and $C\vcentcolon=\langle X\setminus Y\rangle_A$ is a complementary factor of $B$ in $A$. Since $X$ is a set of generators for $A$, we only need to show that the Diagram Property of Definition \ref{definition V-free product} is satisfied by the triple $A,B,C$.
	
	\smallskip\noindent
	To this extent, consider a $\varv$-group $D$, and some homomorphisms $\beta:B\rightarrow D$, and $\gamma:C\rightarrow D$. By the Universal Property of $A$, there exists a unique homomorphism $\alpha:A\rightarrow C$ s.t. $\beta\restriction_Y\cup \,\gamma\restriction_{X\setminus Y}\,\subseteq \alpha$. Clearly, $\alpha\restriction_B\,=\beta$, by the Universal Property of $B$, as $Y$ is a $\varv$-basis of $B$ and $\alpha\restriction_Y\,=\beta\restriction_Y$, by construction. Similarly, $\alpha\restriction_C\,=\gamma$, since $\alpha\restriction_{X\setminus Y}\,=\gamma\restriction_{X\setminus Y}$ and $X\setminus Y$ is a $\varv$-basis of $C$.
	
	\smallskip\noindent
	Finally, the uniqueness of $\alpha$ is ensured by the Universal Property of $A$. Indeed, if $\alpha':A\rightarrow D$ were a homomorphism s.t. $\alpha'\restriction_B\,=\beta$ and $\alpha'\restriction_C\,=\gamma$, then we would also have: $\beta\restriction_Y\,\cup\, \gamma\restriction_{X\setminus Y}\,\subseteq\alpha'$. So, $\alpha$ and $\alpha'$ would necessarily be equal, by the Universal Property of $A$, as desired.
	
	\smallskip\noindent
	Finally, for implication $[1\Rightarrow3]$, let $C$ be a $\varv$-free complementary factor of $B$ in $A$. Given any two $\varv$-bases $Y,Z$ of $B$ and $C$, respectively, we claim that $X\vcentcolon=Y\cup Z$ is a $\varv$-basis of $A$.
	
	\smallskip\noindent
	Clearly, $X$ generates $A$, as so does $B\cup C$. So, it only remains to prove that for any $D\in \varv$ and any mapping $f:X\rightarrow D$ there is a unique homomorphism from $A$ to $D$ extending $f$. Notice that $f$ yields the functions $f\restriction_Y\,:Y\rightarrow D$ and $f\restriction_Z\,:Z\rightarrow D$. Hence, by the Universal Properties of $B$ and $C$, there are two homomorphisms $\beta:B\rightarrow D$ and $\gamma:C\rightarrow D$ which are the unique satisfying conditions $f\restriction_Y\,\subseteq\beta$ and $f\restriction_Z\,\subseteq\gamma$. Now, the Diagram Property of Definition\cspace\ref{definition V-free product} ensures the existence of a unique homomorphism $\alpha:A\rightarrow D$ s.t. $\alpha\restriction_B\,=\beta$ and $\alpha\restriction_C\,=\gamma$.
	
	\smallskip\noindent
	In particular, $f=f\restriction_Z\cup\, f\restriction_Z\,\subseteq \alpha$. To see that it is the unique with this property, consider some homomorphism $\alpha':A\rightarrow D$ s.t. $f\subseteq\alpha$. Then,
	\begin{equation*}
		\alpha'\restriction_B=\alpha\restriction_B=\beta \quad\text{and}\quad \alpha'\restriction_B=\alpha\restriction_B=\beta,
	\end{equation*}
	by the Universal Properties of $B$ and $C$, respectively. However, by the Diagram Property of $B,C$ and $A$, this means that $\alpha'=\alpha$. Therefore, $X$ a $\varv$-basis of $A$ extending $Y$, as desired.
\end{proof}
\noindent
\begin{lemma}\label{lemma existence of V-free complementary factor}
	Let $\varv$ be a variety of groups, $F$ be a $\varv$-free group of infinite rank and $A$ be a $\varv$-free group s.t. $A\leq_*F$ and $\mathrm{rank}(A)\leq\mathrm{rank}(F)$. Suppose that there is a $V$-basis $X$ of $F$ admitting a subset $X_A$ s.t. $A\leq\langle X_A\rangle_F$ and $|X\setminus X_A|=|X|=\mathrm{rank}(F)$. Then, $A$ has a complementary factor in $F$ which is also a $\varv$-free group.
\end{lemma}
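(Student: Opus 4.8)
The plan is to reduce the statement to the basis-extension criterion of Lemma~\ref{lemma characterization of f.f.}: it suffices to produce a single $\varv$-basis of $A$ which extends to a $\varv$-basis of $F$, for then implication $[2\Rightarrow1]$ of that lemma yields a $\varv$-free complementary factor of $A$ in $F$. Write $X=X_A\sqcup X'$ with $X'=X\setminus X_A$, and set $B=\langle X_A\rangle_F$ and $D=\langle X'\rangle_F$, so that $F=B*_\varv D$, $A\leq B$, and $\mathrm{rank}(D)=|X'|=\mathrm{rank}(F)=:\kappa$, an infinite cardinal which also dominates $\mathrm{rank}(A)$ and $\mathrm{rank}(B)$.

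First I would extract a retraction from the hypothesis $A\leq_* F$. Fixing a complementary factor $C_0$ with $F=A*_\varv C_0$, the diagram property of Definition~\ref{definition V-free product} applied to $\mathrm{id}_A\colon A\to A$ and the trivial map $C_0\to A$ produces a homomorphism $\rho\colon F\to A$ with $\rho\restriction_A=\mathrm{id}_A$. Next I would use $\rho$ to relocate the large free factor inside $\ker\rho$. Consider the endomorphism $\phi$ of $F$ determined on the basis $X$ by $\phi(x)=x$ for $x\in X_A$ and $\phi(x')=\rho(x')^{-1}x'$ for $x'\in X'$; since every multiplier $\rho(x')$ lies in $A\leq\langle X_A\rangle_F$ and is therefore fixed by $\phi$, the assignment $x'\mapsto\rho(x')x'$ defines a two-sided inverse, so $\phi$ is an automorphism of $F$. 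Hence $Z:=\{\rho(x')^{-1}x':x'\in X'\}$ is, together with $X_A$, a $\varv$-basis of $F$ by Remark~\ref{remark bijection of V-bases yields an automorphism of a V-free algebra}, it has cardinality $\kappa$, and by construction $Z\subseteq\ker\rho$. Replacing $D$ by $\langle Z\rangle_F$, I may thus assume that the full-rank free factor $D$ is contained in $\ker\rho$.

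The heart of the argument is then to exploit the surplus $|Z|=\kappa\geq\mathrm{rank}(B)$ to absorb the part of $B$ lying outside $A$ into $D$, thereby straightening $A$ into a \emph{standard} free factor. Concretely, I would fix a $\varv$-basis $Y=\{a_j:j\in J\}$ of $A$ and, using the retraction $r:=\rho\restriction_B\colon B\to A$ to split each generator of $B$ as an $A$-part times a $\ker r$-part, build a second automorphism of $F$ that trades the $\ker r$-defect of the $X_A$-generators against $\mathrm{rank}(B)$-many fresh elements of $Z$, the remaining $\kappa$ elements of $Z$ being left untouched. The target is a $\varv$-basis $X^\ast$ of $F$ in which the images of $Y$ appear as a subset; equivalently, an automorphism $\theta$ of $F$ carrying $A$ onto $\langle S\rangle_F$ for some $S\subseteq X$. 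Once this is achieved, $S$ extends to the $\varv$-basis $X$ of $F$, so $\theta(A)\leq_* F$ with a $\varv$-free complement by Lemma~\ref{lemma characterization of f.f.}, and pulling back along $\theta^{-1}$—an automorphism, hence one carrying $\varv$-free complements to $\varv$-free complements—gives the desired $\varv$-free complementary factor of $A$ in $F$.

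The main obstacle is precisely this last absorption step. In a general variety a retract need not be a $\varv$-free factor, so $A$ need not be a $\varv$-free factor of $B=\langle X_A\rangle_F$, and the complement of $A$ in $F$ is a priori only isomorphic to $(B/N_B(A))*_\varv D$, which carries no visible $\varv$-freeness. It is exactly here that the hypothesis $|X\setminus X_A|=|X|$ is indispensable: the full-rank free room furnished by $Z$ is what allows the defect $\ker r$ to be absorbed into a $\varv$-free group, and without this surplus the straightening automorphism cannot be built. All remaining verifications—that the two maps above are automorphisms, that $Z$ and $X_A$ form a basis, and that the final complement is $\varv$-free—are routine applications of the universal properties recorded in Remark~\ref{remark: universal property} and Lemmas~\ref{lemma extension automorphisms of coproducts}--\ref{lemma characterization of f.f.}.
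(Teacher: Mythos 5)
Your reduction to the basis-extension criterion of Lemma~\ref{lemma characterization of f.f.} is sound, and your first automorphism $\phi$ (replacing each $x'\in X\setminus X_A$ by $\rho(x')^{-1}x'$, with inverse $x'\mapsto\rho(x')x'$) is correctly set up. But the proof stops exactly where the real work begins: the ``absorption step'' that is supposed to produce an automorphism $\theta$ of $F$ carrying $A$ onto $\langle S\rangle_F$ for $S$ contained in a $\varv$-basis is never constructed. You describe it as ``trading the $\ker r$-defect of the $X_A$-generators against fresh elements of $Z$'', but you give no definition of the map on generators, no verification that it is an automorphism, and no argument that it carries $A$ onto a standard factor. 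This is not a routine verification: in a general variety a retract of a $\varv$-free group need not be a $\varv$-free factor, there is no analogue of Nielsen transformations, and the generating set $\rho(X_A)$ of $A$ need not be $\varv$-independent, so there is no canonical way to split the generators of $B=\langle X_A\rangle_F$ compatibly with a chosen basis of $A$. Since the statement that $A$ is automorphic to a standard free factor is essentially equivalent to the lemma itself (via Lemma~\ref{lemma characterization of f.f.}), declaring this step an ``obstacle'' and leaving it unresolved leaves the proof without its core.

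The paper sidesteps the straightening problem entirely by an Eilenberg-swindle argument: split $X=X_0\sqcup X_1$ with $X_A\subseteq X_0$ and $|X_0\setminus X_A|=|X_0|=|X_1|=|X|$, transport a complementary factor $C$ of $A$ into $F_0=\langle X_0\rangle_F$ via the isomorphism $F\cong F_0$ fixing $X_A$ pointwise, obtaining $F=A*_\varv C_0*_\varv F_1$; the complement $C_0*_\varv F_1$ is then shown to be $\varv$-free not by controlling $C_0$ but by writing $F_1=A_1*_\varv F_1'$ with $A_1\cong A$, $F_1'\cong F$, and observing that $C_0*_\varv A_1\cong C_0*_\varv A=F_0\cong F$. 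No retraction and no structural information about $C_0$ is needed. To repair your argument you would either have to carry out the absorption explicitly --- which I do not see how to do in an arbitrary variety --- or switch to this swindle.
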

\begin{proof}
	Since $X$ is infinite, there is a decomposition of $X$ into two disjoint sets $X_0$ and $X_1$ s.t. $X_A\subseteq X_0$ and $|X_0\setminus X_A|=|X_0|=|X_1|=|X|$. Let $F_i\vcentcolon=\langle X_i\rangle_F$ for all $i<2$. Then, by Lemma\cspace\ref{lemma characterization of f.f.}, $F=F_0\ast_V F_1$.
	
	\smallskip\noindent
	Now, let $\phi_0:F\rightarrow F_0$ be the isomorphism induced by a bijection $X\rightarrow X_0$ fixing $X_A$ pointwise (such a bijection exists by condition $|X_0\setminus X_A|=|X\setminus X_A|$ above). Then, since isomorphisms preserve $V$-free products, necessarily $\phi_0(A)=A\leq_*F_0$, and, for any complementary factor $C$ of $A$ in $F$, we have that:
	\begin{equation}\tag{$*_1$}\label{eq.1 - lemma existence of V-free complementary factor}
		F=F_0*_\varv F_1=A*_\varv C_0*_\varv F_1,
	\end{equation}
	with $C_0\vcentcolon=\phi_0(C)$.
	Equation\cspace\eqref{eq.1 - lemma existence of V-free complementary factor} witnesses that $C'\vcentcolon=C_0*_\varv F_1$ is a complementary factor of $A$ in $F$. To see that $C'$ is $\varv$-free, recall that $F_1$ is $\varv$-free with infinite rank greater or equal than $\mathrm{rank}(A)$. So, the basis $X_1$ admits a decomposition into two disjoint sets $X_A'$ and $X_1'$ s.t. $|X_A'|=\mathrm{rank}(A)$ and $|X_1'|=|X_1|$. As a result, we get two $\varv$-free subgroups $A_1\vcentcolon=\langle X_A'\rangle_F$ and $F_1'\vcentcolon=\langle X_1'\rangle_F$ of $F_1$ s.t. $A_1\cong A$, $F_1'\cong F$ and $F_1=A_1*_\varv F_1'$, by Lemma\cspace\ref{lemma characterization of f.f.}. 
	
	\smallskip\noindent
	Putting all this together, by the associativity of $V$-free product we obtain the following factorization:
	\begin{equation}\tag{$*_2$}\label{eq.2 - lemma existence of V-free complementary factor}
		C'=C_0*_\varv F_1=(C_0*_\varv A_1)*_\varv F'_1.
	\end{equation}
	Since the $\varv$-free product of $\varv$-free groups is still $\varv$-free and
	\begin{equation*}
		C_0*_\varv A_1\cong C_0\ast_V A=F_0\cong F,
	\end{equation*}
	\noindent
	equation \eqref{eq.2 - lemma existence of V-free complementary factor} shows that $C'$ is $\varv$-free, as desired.
\end{proof}
\noindent
We conclude this series of preliminary lemmas by recalling a simple, but crucial, consequence of Tarski-Vaught test for elementary substructures (see \cite{marker}, or \cite{ziegler-tent}, for a detailed discussion). The statement of Lemma\cspace\ref{lemma f.f. implies el. substructure} is already known as folklore. For the benefit of the reader, we give here a complete proof, which is just a straightforward generalization of Lemma\cspace 2.12 in \cite{perin} to the case of a generic variety of groups. 
\begin{fact}[Tarski-Vaught test]\label{fact - tarski-vaught}
	Suppose that $M$ is a substructure of a first-order structure $N$. Then, $M$ is an elementary substructure of $N$ if and only if for every formula $\phi(t,\bar{z})$ and $\bar{a}\in M^{|\bar{z}|}$, whenever there is some $b\in N$ s.t. $N\models\phi(b,\bar{a})$, then there is also a $c\in M$ for which $N\models\phi(c,\bar{a})$.
\end{fact}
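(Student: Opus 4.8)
The plan is to prove the two implications separately, with the forward direction being a direct unwinding of the definition of elementary substructure and the converse carrying all the content via an induction on the complexity of formulas. Throughout I would work with the complete set of connectives $\{\neg,\wedge,\exists\}$, so that every first-order $L$-formula is, up to logical equivalence, built from atomic formulas using only these.

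First I would handle the forward direction: suppose $M$ is an elementary substructure of $N$ and that $N\models\phi(b,\bar{a})$ for some $b\in N$ and $\bar{a}\in M^{|\bar{z}|}$. Then $N\models\exists t\,\phi(t,\bar{a})$, and since this formula has parameters only from $M$, elementarity gives $M\models\exists t\,\phi(t,\bar{a})$. A witness $c\in M$ then satisfies $M\models\phi(c,\bar{a})$, and applying elementarity once more yields $N\models\phi(c,\bar{a})$, as required.

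For the converse, I would assume the displayed test condition and prove by induction on $\psi$ that $M\models\psi(\bar{a})$ if and only if $N\models\psi(\bar{a})$, for every tuple $\bar{a}$ from $M$; specializing this equivalence to sentences then yields $M\preceq N$. The base case is that atomic formulas are absolute between $M$ and $N$, which holds precisely because $M$ is a substructure of $N$: the interpretations of the function, relation and constant symbols of $L$ agree on elements of $M$. The cases $\psi=\neg\chi$ and $\psi=\chi_0\wedge\chi_1$ are purely formal, following immediately from the induction hypothesis applied to $\chi$, $\chi_0$ and $\chi_1$.

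The only step that consumes the hypothesis is the existential one, $\psi(\bar{z})=\exists t\,\phi(t,\bar{z})$, and this is where I expect whatever small difficulty there is to lie, though it is really just bookkeeping about free variables and the adequacy of the chosen connectives. If $M\models\psi(\bar{a})$, a witness $c\in M$ gives $M\models\phi(c,\bar{a})$, hence $N\models\phi(c,\bar{a})$ by the induction hypothesis, and so $N\models\psi(\bar{a})$. Conversely, if $N\models\psi(\bar{a})$ there is some $b\in N$ with $N\models\phi(b,\bar{a})$; the test condition, applied to $\phi$ and the parameters $\bar{a}\in M^{|\bar{z}|}$, produces $c\in M$ with $N\models\phi(c,\bar{a})$, and the induction hypothesis then transfers this down to $M\models\phi(c,\bar{a})$, giving $M\models\psi(\bar{a})$. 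This closes the induction and hence the proof.
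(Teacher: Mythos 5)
Your proof is correct and is the standard textbook argument for the Tarski--Vaught test: the forward direction by unwinding elementarity through the existential quantifier, and the converse by induction on formula complexity over the adequate set of connectives $\{\neg,\wedge,\exists\}$, with the test hypothesis consumed exactly once, in the downward half of the existential step. The paper itself offers no proof of this statement --- it is recorded as a Fact and deferred to the cited references \cite{marker} and \cite{ziegler-tent} --- and the argument found there is the same one you give, so there is nothing to compare beyond noting that your write-up is complete and correct.
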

\begin{lemma}\label{lemma f.f. implies el. substructure}
	Let $V$ be a variety of groups and $A,F$ be $V$-free groups of infinite rank. If $A\leq_*F$ and $\mathrm{rank}(A)<\mathrm{rank}(F)$, then $A$ is an elementary subgroup of $F$.
\end{lemma}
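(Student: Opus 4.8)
The plan is to verify the Tarski--Vaught criterion of Fact~\ref{fact - tarski-vaught} directly, the point being that every first-order formula witnessed by some $b \in F$ over parameters $\bar a \in A$ can be re-witnessed inside $A$ after applying a suitable automorphism of $F$ that fixes $\bar a$ and drags $b$ into $A$. Since automorphisms preserve satisfaction, i.e. $F\models\phi(b,\bar a)$ iff $F\models\phi(\sigma(b),\sigma(\bar a))$, if $\sigma$ fixes the finite tuple $\bar a$ and maps $b$ into $A$, then $c:=\sigma(b)\in A$ is the required witness and we are done.

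First I would fix a $V$-basis $X$ of $F$, with $|X| = \mathrm{rank}(F) =: \kappa$, and a $V$-basis $Y$ of $A$, with $|Y| = \mathrm{rank}(A) =: \mu$, both infinite and $\mu < \kappa$. Each generator in $Y$ is a word in finitely many letters of $X$, so collecting all of these letters yields a subset $X_A \subseteq X$ with $A \leq \langle X_A\rangle_F$ and $|X_A| \leq \mu \cdot \aleph_0 = \mu < \kappa$; hence $|X \setminus X_A| = |X| = \kappa$. This is exactly the hypothesis needed to invoke Lemma~\ref{lemma existence of V-free complementary factor}, which together with $A \leq_* F$ and $\mathrm{rank}(A) \leq \mathrm{rank}(F)$ produces a $V$-free complementary factor of $A$ in $F$. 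By Lemma~\ref{lemma characterization of f.f.}, every $V$-basis of $A$ then extends to a $V$-basis of $F$; in particular $Y$ extends to a $V$-basis $X' = Y \sqcup Z$ of $F$, where $Z$ is a $V$-basis of the complementary factor.

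With the aligned basis $X' = Y \sqcup Z$ in hand, the automorphism is built through Remark~\ref{remark bijection of V-bases yields an automorphism of a V-free algebra}: any bijection of $X'$ onto itself extends to an automorphism of $F$. Given $\phi(t,\bar z)$, $\bar a \in A$ and $b \in F$ with $F \models \phi(b,\bar a)$, only finitely many letters of $X'$ occur in $\bar a$ and $b$; say $b$ uses the $Z$-letters $z_1,\dots,z_n$, while the $Y$-letters occurring in $\bar a$ and $b$ form a finite set $U \subseteq Y$. Since $Y$ is infinite I can choose fresh distinct letters $y_1',\dots,y_n' \in Y \setminus U$ and let $\tau$ be the bijection of $X'$ that swaps $z_j$ with $y_j'$ for each $j\leq n$ and fixes everything else; let $\sigma \in \mathrm{Aut}(F)$ be its extension. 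Then $\sigma$ fixes $U$ pointwise, hence fixes $\bar a$, while $\sigma(b)$ is obtained from the word $b$ by replacing each $z_j$ with $y_j' \in Y$, so $\sigma(b) \in \langle Y\rangle_F = A$. Taking $c := \sigma(b)$ completes the Tarski--Vaught verification and shows $A \preceq F$.

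The only genuinely delicate step is obtaining the decomposition $X' = Y \sqcup Z$: complementary factors of $V$-free factors need not be $V$-free in an arbitrary variety, so one cannot simply split off $A$ and assume the remainder is free. This is precisely where the strict inequality $\mathrm{rank}(A) < \mathrm{rank}(F)$ is essential, as it guarantees $|X_A| < |X|$ and thereby the full-size complement $|X\setminus X_A| = |X|$ required to apply Lemma~\ref{lemma existence of V-free complementary factor}. Once the aligned basis exists the swap automorphism is routine; the finiteness of $\bar a$ and $b$ makes the mere infinitude of $Y$, rather than the exact ranks, all that is needed for the final move.
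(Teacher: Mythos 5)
Your proof is correct and follows essentially the same route as the paper's: verify Tarski--Vaught, use Lemma~\ref{lemma existence of V-free complementary factor} together with Lemma~\ref{lemma characterization of f.f.} and the strict rank inequality to align a $V$-basis of $A$ with one of $F$, and then conjugate $b$ into $A$ by a basis-permuting automorphism fixing the finitely many letters of $\bar a$. The only (harmless) cosmetic difference is that your automorphism swaps the $Z$-letters of $b$ with fresh letters of $Y$, whereas the paper's maps the letters of $b$ into $X_A$ while fixing those of $\bar a$.
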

\begin{proof}
	Consider some formula $\phi(t,\bar{z})$ and a tuple $\bar{a}\in A^{|\bar{z}|}$ admitting an element $b\in F$ s.t. $F\models\phi(b,\bar{a})$. By Tarski-Vaught test (Fact.\cspace\ref{fact - tarski-vaught}), it suffices to show that there is some $c\in A$ s.t. $F\models\phi(c,\bar{a})$.
	
	\smallskip\noindent
	By the rank bound, Lemma\cspace\ref{lemma existence of V-free complementary factor} and Lemma\cspace\ref{lemma characterization of f.f.} ensure the existence of a $V$-basis $X_A$ of $A$ extending to a $V$-basis $X$ of $F$. Now, let $X_0$ be the finite subset of $X$ consisting of the elements occurring in the expression of $b$ as reduced word in $X$. Similarly, in the expression of the coordinates of $\bar{a}$ as reduced words in $X$ only a finite subset $X_1$ of $X_A$ is involved. So, since $|X_A|=\mathrm{rank}(A)$ is infinite, there exists a bijection $X\rightarrow X$ mapping $X_0$ into $X_A$ and fixing $X_1$ pointwise. Notice that the induced automorphism $\alpha\in\mathrm{Aut}(F)$ is s.t. $\alpha(b)\in A$ and $\alpha(\bar{a})=\bar{a}$. Moreover, it witnesses that $F\models\phi(\alpha(b),\alpha(\bar{a}))$, i.e. $F\models\phi(\alpha(b),\bar{a})$. Therefore, by letting $c\vcentcolon=\alpha(b)$, we get that $c\in A$ and $F\models\phi(c,\bar{a})$, as desired.
\end{proof}
\if{Exactly as in the case of free abelian groups and direct summands, the property of being $\varv$-free is preserved under quotienting by a $\varv$-free factor. This is the content of Lemma\cspace\ref{lemma quotient of V-free factor is V-free}.
\begin{notation}\label{notation S/N}
	In the rest of the paper we will adopt the following convention. If $A$ is a group, and $N$ is a normal subgroup of $A$, then, for any subset $S\subseteq A$, we let:
	\begin{equation*}
		S/N\vcentcolon=\{sN\,:\,s\in S\}.
	\end{equation*}  
\end{notation}
\noindent
Observe that $S/N$ is generally not a group. Indeed, in the cases of our interest, $N$ will typically be a verbal subgroup of $A$ (cf. Definition\cspace\ref{definition verbal subgroup}), and $S$ will be just a free basis of $A$.
\begin{lemma}\label{lemma quotient of V-free factor is V-free}
	Let $\varv$ be a variety of groups. Suppose that $A,B\in \varv$ are $\varv$-free groups such that:
	\begin{enumerate}[(1)]
		\item $B\leq_* A$;
		\item $B$ has a $\varv$-free complementary factor $C$ in $A$.
	\end{enumerate}
	Then, $A/N_A(B)$ is $\varv$-free. Furthermore, if $Z$ is a $\varv$-basis of $C$, then $Z/N_A(B)$ is a $\varv$-basis of $A/N_A(B)$.
\end{lemma}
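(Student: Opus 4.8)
The plan is to identify the quotient $A/N_A(B)$ with the complementary factor $C$ and then transport $\varv$-freeness and the basis across the resulting isomorphism. Write $\pi\colon A\to A/N_A(B)$ for the canonical projection and set $\bar\pi\vcentcolon=\pi\restriction_C\colon C\to A/N_A(B)$. Since $A$ is generated by $B\cup C$ and $\pi$ annihilates $B$, the image $\pi(C)$ generates $A/N_A(B)$, so $\bar\pi$ is surjective.

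To manufacture an inverse I would invoke the Diagram Property of the decomposition $A=B*_\varv C$ granted by hypotheses $(1)$ and $(2)$. As $\varv$ is closed under subgroups we have $C\in\varv$, so Definition\cspace\ref{definition V-free product} applies with $D=C$, with $\beta\colon B\to C$ the trivial homomorphism and $\gamma=\text{id}_C$. This produces a unique retraction $\rho\colon A\to C$ with $\rho\restriction_B$ trivial and $\rho\restriction_C=\text{id}_C$. Because $\rho$ kills $B$, its kernel is a normal subgroup of $A$ containing $B$, hence $N_A(B)\subseteq\ker\rho$; consequently $\rho$ factors through the quotient, yielding a homomorphism $\bar\rho\colon A/N_A(B)\to C$ with $\bar\rho\circ\pi=\rho$.

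Next I would verify that $\bar\pi$ and $\bar\rho$ are mutually inverse. For $c\in C$ we have $\bar\rho(\bar\pi(c))=\rho(c)=c$, so $\bar\rho\circ\bar\pi=\text{id}_C$; and since the cosets $\pi(c)$ with $c\in C$ generate $A/N_A(B)$ while $\bar\pi(\bar\rho(\pi(c)))=\bar\pi(c)=\pi(c)$, we also get $\bar\pi\circ\bar\rho=\text{id}_{A/N_A(B)}$. Hence $\bar\pi\colon C\to A/N_A(B)$ is an isomorphism. Since $C$ is $\varv$-free by hypothesis $(2)$, it follows at once that $A/N_A(B)$ is $\varv$-free. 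Moreover, an isomorphism preserves $\varv$-independence and generation, so the image under $\bar\pi$ of a $\varv$-basis $Z$ of $C$ is a $\varv$-basis of $A/N_A(B)$; and this image is precisely $\{\pi(z):z\in Z\}=\{zN_A(B):z\in Z\}=Z/N_A(B)$, giving the \say{furthermore} clause.

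The argument is essentially a diagram chase, so I do not expect a genuinely hard step; the one place to be careful is the construction of the retraction $\rho$ and the verification that $N_A(B)\subseteq\ker\rho$, which is exactly where the Diagram Property of the $\varv$-free product and the minimality of the normal closure are both used. Once $\rho$ is in hand, the surjectivity of $\bar\pi$ and the two inversion identities are routine, and the transport of the basis is immediate from $\bar\pi$ being an isomorphism.
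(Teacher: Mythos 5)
Your proof is correct, but it takes a genuinely different route from the paper's. The paper verifies the Universal Property of Remark\cspace\ref{remark: universal property} for the set $\pi(Z)$ directly: given a $\varv$-group $D$ and a map $\hat{f}:Z/N_A(B)\rightarrow D$, it uses Lemma\cspace\ref{lemma characterization of f.f.} to produce the $\varv$-basis $Y\sqcup Z$ of $A$, extends the map sending $Y$ to $e$ and $Z$ according to $\hat{f}$ to a homomorphism $\phi:A\rightarrow D$, and then checks $N_A(B)\subseteq\ker(\phi)$ by an explicit computation on the conjugates $aw(\bar{y})a^{-1}$ with $\bar{y}$ in $Y$. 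You instead construct a single retraction $\rho:A\rightarrow C$ via the Diagram Property and conclude that $\pi\restriction_C$ is an isomorphism $C\cong A/N_A(B)$, from which both the $\varv$-freeness and the statement about $Z/N_A(B)$ follow at once. Each step of your argument checks out: $\pi(C)$ generates the quotient because $\pi$ kills $B$; $C\in\varv$ because varieties are closed under subgroups; $N_A(B)\subseteq\ker(\rho)$ by minimality of the normal closure; and the two composites are the identity on generating sets, hence everywhere. Your route buys two things: it replaces the paper's explicit kernel computation by the one-line minimality argument, and it is slightly more general, since it never uses that $A$ or $B$ is $\varv$-free --- only that $A=B*_\varv C$ with $C$ a $\varv$-free complementary factor --- whereas the paper's proof needs the basis $Y\sqcup Z$ of $A$ and hence the freeness hypotheses. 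The paper's proof, for its part, exhibits the universal property of the quotient explicitly rather than deducing it from an isomorphism, but the isomorphism you establish yields that property immediately, so nothing is lost.
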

\begin{proof}
	For ease of reading, we denote $N\vcentcolon= N_A(B)$ and by $\pi:A\rightarrow A/N$ the natural projection to the quotient. Let $Z$ be a $\varv$-basis of a complementary factor $C$ of $B$ in $A$. Since $A$ and $C$ are $\varv$-free and $A=B*_\varv C$, any $\varv$-basis $Y$ of $B$ naturally extends to a $\varv$-basis $X\vcentcolon=Y\sqcup Z$ of $A$. Clearly, $\pi(X)$ is a set of generators for $A/N$, as so is $X$ for $A$. Since $Y\subseteq N$, this means that $Z/N=\pi(Z)$ generates $A/N$. So, it only remains to show that the set $\pi(Z)$ satisfies the Universal Property of Remark\cspace\ref{remark: universal property}.
	 
	\smallskip\noindent
	 We can associate to any $\varv$-group $D$, and any map $\hat{f}:Z/N\rightarrow D$, a function ${f}:Y\sqcup Z\rightarrow D$ such that:
	\begin{equation*}
		{f}(x)=\begin{cases}
			\hat{f}(xN)\quad&\text{if }x\in Z;\\
			e\quad&\text{if }x\in Y.
		\end{cases}
	\end{equation*}
	Since $Y\sqcup Z$ is a $\varv$-basis of $A$, ${f}$ uniquely extends to a homomorphism ${\phi}:A\rightarrow D$. If $N\subseteq\ker({\phi})$, there exists a homomorphism $\hat{\phi}: A/N\rightarrow D$ s.t. ${\phi}=\hat{\phi}\circ\pi$ (cf. \cite{johnson}, Lemma\cspace1 pg.\cspace42). Moreover, if this is the case, the uniqueness of $\hat{\phi}$ follows directly from the uniqueness of ${\phi}$. We show that the inclusion $N\subseteq\ker({\phi})$ holds.
	
	\smallskip\noindent
	Every element of $N$ is a group word whose nested terms are of the form $aw(\bar{y})a^{-1}$, for some group word $w(\bar{z})$, $\bar{y}\in Y^{|\bar{z}|}$, and $a\in A$. On these terms, $\phi$ acts as follows:
	\begin{equation*}
		\begin{split}
			{\phi}(aw(\bar{y})a^{-1})&={\phi}(a)w({\phi}(\bar{y})){\phi}(a)^{-1}\\
			&={\phi}(a)w({f}(\bar{y})){\phi}(a)^{-1}\\
			&={\phi}(a){\phi}(a)^{-1}=e,
		\end{split}
	\end{equation*}
	since, by definition, ${\phi}(y)={f}(y)=e$ for all $y\in Y$, so that $w({f}(\bar{y}))$ is a group expression involving only trivial elements. Therefore, $N\subseteq\ker({\phi})$, and $\hat{\phi}$ is well-defined, as desired. 
\end{proof}}\fi
\noindent
Our strategy in Section\cspace4 will consist in constructing a \say{pathological} factorization pattern (with respect to the relation $\leq_*$) in the variety of all groups, and then transposing it into the desired one through a suitable functor. In the rest of this section we briefly outline the construction of this machinery.\if{The rest of this section is devoted to the construction of this machinery, showing that the functorial bridge preserves freeness and free decompositions among varieties.}\fi 
\begin{definition}\label{definition verbal subgroup}
	Let $A$ be a group and $\varv$ be a variety. The \emph{verbal subgroup} of $A$ corresponding to $\varv$ is the subgroup:
	\begin{equation*}
		\wordsv(A)\vcentcolon=\left\langle w(\bar{a})\,:\,w(\bar{z})\in\wordsv,\bar{a}\in A^{|\bar{z}|}\right\rangle_A.
	\end{equation*} 
\end{definition}
\noindent
Observe that verbal subgroups are fully characteristic, as they are invariant under all automorphisms. Hence, for every pair $A$ and $V$ as in Definition\cspace\ref{definition verbal subgroup}, the quotient $A/\wordsv(A)$ is well-defined.

\smallskip\noindent
Before going further, we recall some fundamental notions from Category Theory that we will need in the following.
\begin{notation}
	Let $\catC$ be a category. We denote by $\mathrm{Ob}(\catC)$ the class of the objects of $\catC$, and by $\mathrm{Hom}_\catC(X,Y)$ the set\footnote{All the categories considered in this paper are locally small.} of the morphisms of $\catC$ from $X$ to $Y$, for all $X,Y\in\mathrm{Ob}(\catC)$.
\end{notation}
\begin{definition}
	Let $\catC$ and $\catD$ be categories. A \emph{(covariant) functor} $F:\catC\rightarrow \catD$ is an assignment consisting of\begin{enumerate}[$\bullet$]
			\item a map $\mathrm{Ob}(\catC)\rightarrow\mathrm{Ob}(\catD)$ s.t. $X\mapsto F(X)$;
			\item a map $\mathrm{Hom}_\catC(X,Y)\rightarrow\mathrm{Hom}_\catD(F(X),F(Y))$ s.t. $f\mapsto F(f)$, for any pair $X,Y\in\mathrm{Ob}(\catC)$.
		\end{enumerate}
		These maps must preserve identities and compositions of morphisms, i.e. $F(\mathrm{id}_X)=\mathrm{id}_{F(X)}$, and $F(f\circ g)=F(f)\circ F(g)$, for all $X\in\mathrm{Ob}(\catC)$ and all composable morphisms $f,g$ of $\catC$.
		\if{ If $R:\catC\rightarrow \catD$ and $L:\catD\rightarrow \catC$ are functors, we say that $L$ is a \emph{left adjoint} of $R$ $($or, equivalently, that $R$ is a \emph{right adjoint} of $L)$ if, for every $A\in \mathrm{Ob}(\catC)$, and $B\in\mathrm{Ob}(\catD)$, there exists a bijection $\Phi_{B,A}:\mathrm{Hom}_\catC(L(B),A)\rightarrow\mathrm{Hom}_\catD(B,R(A))$  s.t.
		\begin{equation*}
			\Phi_{B',A'}(f \circ h \circ L(g)) = R(f) \circ \Phi_{B,A}(h) \circ g,
		\end{equation*}
		\smallskip\noindent
		for all $A'\in\mathrm{Ob}(\catC)$, $B'\in\mathrm{Ob}(\catD)$, $f\in\mathrm{Hom}_\catC(A,A')$, $h\in\mathrm{Hom}_\catC(L(B),A)$ and $g\in\mathrm{Hom}_\catD(B',B)$. In this case we also say that $\Phi_{B,A}$ is \emph{natural} in both $A$ and $B$.
	}\fi
\end{definition}
\noindent
If $\varv$ is a subvariety of a variety of groups $\varw$ (i.e. the class-inclusion $\varv\subseteq\varw$ holds), there is a well-defined (covariant) functor from $\cat(\varw)$ to $\cat(\varv)$ given by the assignments:
\begin{enumerate}[$\bullet$]
	\item $A\mapsto A_\varv \vcentcolon =A/\wordsv(A)$, for all objects $A\in \varw$;
	\item $f\mapsto f_\varv$, where $f_\varv$ is the homomorphism defined as
	\begin{equation*}
		f_\varv:A_\varv\rightarrow B_\varv \quad\text{s.t.}\quad a\wordsv(A)\mapsto f(a)\wordsv(B),
	\end{equation*}
	for all arrows (i.e. homomorphisms) $f:A\rightarrow B$ between objects $A$ and $B$ of $\varw$.
\end{enumerate}
\begin{notation}\label{definition functor W->V}
	If $\varv\subseteq\varw$ are varieties of groups, we call the functor $\cat(\varw)\rightarrow\cat(\varv)$ defined by the conditions above the \emph{verbal functor} induced by $\varv$ $($in $\varw$$)$, and we denote it by $\varv$ as well\if{\footnote{The \say{verbal functor} considered in\cspace\cite{pope1984} is the composition of this functor and the inclusion functor of $\cat(\varw)$ in $\cat(\varg)$. This generalization will reveal particularly useful in the following, as it will give a natural way to transfer group tower-constructions through arbitrary varieties.}}\fi.
\end{notation}
Technically speaking, the verbal functor of Notation\cspace\ref{definition functor W->V} depends on both $\varv$ and $\varw$. However, the expression of $A_\varv$ is the same both if we consider $A$ as an object of $\cat(\varw)$, or as an object of the category given by any other variety containing $\varv$ (e.g. $\varg$). A similar observation holds for the arrows of $\cat(\varw)$ as well. Therefore, we omit the dependency from $\varw$ in the notation and simply write $\varv$: in the following, the domain category will be always clear from the context.
\if{
\smallskip\noindent
The first interesting feature of verbal functors is that they preserve free groups among varieties. We formalize this statement in the following result.
\begin{lemma}\label{lemma verbal functor preserves free groups}
	Let $\varv\subseteq\varw$ be varieties of groups. If $A$ is a $\varw$-free group, then $A_\varv$ is $\varv$-free. Moreover, if $X$ is a $\varw$-basis of $A$, then the subset $X/\wordsv(A)$ of $A_\varv$ consisting of the cosets of the elements of $X$ is a $\varv$-basis of $A_\varv$.
\end{lemma}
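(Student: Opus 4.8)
The plan is to characterize $\varv$-freeness of $A_\varv$ via the Universal Property of Remark~\ref{remark: universal property}, reducing everything to the Universal Property that $A$ already enjoys as a $\varw$-free group. Write $\pi\colon A\to A_\varv=A/\wordsv(A)$ for the quotient projection, so that $X/\wordsv(A)=\pi(X)$. There are three things to check: that $A_\varv$ lies in $\varv$, that $\pi(X)$ generates $A_\varv$, and that the pair $(A_\varv,\pi(X))$ satisfies the diagram property of Remark~\ref{remark: universal property}.

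First I would observe that $A_\varv\in\varv$ directly from Definition~\ref{definition verbal subgroup}: for any law $w(\bar z)\in\wordsv$ and any tuple of cosets $\bar a\wordsv(A)$, one has $w(\bar a\wordsv(A))=w(\bar a)\wordsv(A)$, and $w(\bar a)\in\wordsv(A)$ by the very definition of the verbal subgroup, so $w(\bar a\wordsv(A))=e$ in $A_\varv$; thus $A_\varv$ satisfies all the laws of $\varv$. Generation of $A_\varv$ by $\pi(X)$ is immediate from the surjectivity of $\pi$ together with the fact that $X$ generates $A$.

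The heart of the argument is the Universal Property, and this is where the hypothesis $\varv\subseteq\varw$ enters. Given any $B\in\varv$ and any function $f\colon\pi(X)\to B$, I would set $g\vcentcolon= f\circ(\pi\restriction_X)\colon X\to B$. Since $\varv\subseteq\varw$ we have $B\in\varw$, so the Universal Property of the $\varw$-free group $A$ yields a unique homomorphism $\phi\colon A\to B$ extending $g$. Next I would show $\wordsv(A)\subseteq\ker(\phi)$: the verbal subgroup is generated by elements $w(\bar a)$ with $w(\bar z)\in\wordsv$ and $\bar a\in A^{|\bar z|}$, and since $B\in\varv$ satisfies the law $w$, we get $\phi(w(\bar a))=w(\phi(\bar a))=e$. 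By the homomorphism theorem, $\phi$ factors uniquely as $\phi=\hat f\circ\pi$ for a homomorphism $\hat f\colon A_\varv\to B$, and $\hat f$ extends $f$ by construction. Uniqueness of $\hat f$ follows by pulling back along $\pi$: any $\psi\colon A_\varv\to B$ extending $f$ makes $\psi\circ\pi$ an extension of $g$, hence $\psi\circ\pi=\phi$ by uniqueness at the level of $A$, and surjectivity of $\pi$ forces $\psi=\hat f$. Invoking Remark~\ref{remark: universal property} then gives that $A_\varv$ is $\varv$-free with basis $\pi(X)$.

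I do not expect a serious obstacle here; the argument is a clean diagram chase, and the only point requiring care is the factorization $\wordsv(A)\subseteq\ker(\phi)$, which is exactly the bridge provided by the inclusion $\varv\subseteq\varw$ together with the defining property of the verbal subgroup. One minor subtlety worth flagging is that $\pi$ need not be injective on $X$, so $\pi(X)$ may have smaller cardinality than $X$; but since the Universal Property is phrased in terms of functions out of the \emph{set} $\pi(X)$, this causes no difficulty, and the conclusion that $\pi(X)$ is $\varv$-independent comes for free from Remark~\ref{remark: universal property}.
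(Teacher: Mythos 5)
Your proof is correct and follows essentially the same route as the paper's: pull a map $\hat f\colon X/\wordsv(A)\to B$ back to $X$, invoke the Universal Property of the $\varw$-free group $A$ (using $B\in\varv\subseteq\varw$), and factor the resulting homomorphism through the quotient. The only cosmetic difference is that you obtain the factorization via the homomorphism theorem after checking $\wordsv(A)\subseteq\ker(\phi)$ explicitly, whereas the paper composes the verbal functor's induced map $\phi_\varv$ with the natural isomorphism $B_\varv\cong B$ — the same content packaged differently.
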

\begin{proof}
	Let $X\subseteq A$ be a $\varw$-basis of $A$. It suffices to show that $X/\wordsv(A)$ and $A_\varv$ satisfy the Universal Property for $\varv$. For every group $B\in\varv\subseteq\varw$, and any function $\hat{f}:X/\wordsv(A)\rightarrow B$, the mapping ${f}:X\rightarrow B$ s.t. $x\mapsto \hat{f}(x\wordsv(A))$ induces a unique homomorphism ${\phi}:A\rightarrow B$ satisfying ${f}\subseteq{\phi}$, since $A$ is $\varw$-free by assumption.
	
	\smallskip\noindent
	Let $\hat{\phi}\vcentcolon=i\circ {\phi}_\varv$, where $i$ denotes the natural isomorphism between $B_\varv$ and $B$, i.e. $i:b\wordsv(B)\mapsto b$, for all $b\in B$ (recall that $B$ is a $\varv$-group, so the verbal subgroup $\wordsv(B)$ is trivial and such $i$ must be an isomorphism). We show that $\hat{\phi}:A_\varv\rightarrow B$ is the desired homomorphism.
	
	\smallskip\noindent
	Clearly, $\hat{\phi}$ extends $\hat{f}$, since we have that for all $x\in X$:
	\begin{equation*}
		\hat{\phi}(x\wordsv(A))=i({\phi}_\varv(x\wordsv(A)))=i(f(x)\wordsv(B))=f(x)=\hat{f}(x\wordsv(A)).
	\end{equation*}
	Finally, the uniqueness of $\hat{\phi}$ follows directly from the uniqueness of ${\phi}$, since any mapping of cosets in $X/\wordsv(A)$ uniquely yields a mapping of the basis $X$, as above.
\end{proof}
\begin{lemma}\label{lemma verbal functors preserve the rank of absolutely free groups}
	 Let $\varv$ be a non-trivial variety of groups (i.e. $\varv$ contains a group which is non-trivial). Then, the verbal functor $\varv:\cat(\varg)\rightarrow\cat(\varv)$ maps every absolutely free group into a $\varv$-free group of the same rank.
\end{lemma}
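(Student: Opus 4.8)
The plan is to deduce the freeness of the image straight from the previous lemma and to concentrate the actual work on checking that the rank is not diminished. First I would fix an absolutely free group $A = F(\kappa)$ together with an absolutely free basis $X$, so that $|X| = \kappa$, and apply Lemma~\ref{lemma verbal functor preserves free groups} to the inclusion $\varv \subseteq \varg$. This immediately yields that $A_\varv = A/\wordsv(A)$ is $\varv$-free and that the set of cosets $X/\wordsv(A) = \{x\wordsv(A) : x \in X\}$ is a $\varv$-basis of $A_\varv$. Consequently the rank of $A_\varv$ equals $|X/\wordsv(A)|$, and the whole content of the statement reduces to the claim that this cardinality equals $|X| = \kappa$, i.e.\ that the canonical projection $\pi : A \to A_\varv$ is injective on $X$.

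To establish this injectivity I would exploit the hypothesis that $\varv$ is non-trivial. Fix a non-trivial $G \in \varv$ and an element $g \in G$ with $g \neq e$. Given two distinct elements $x, y \in X$, the Universal Property of the absolutely free group $A$ (Remark~\ref{remark: universal property} for $\varg$) produces a homomorphism $\phi : A \to G$ sending $x$ to $g$ and every other element of $X$, in particular $y$, to $e$. Since any homomorphism maps a verbal subgroup into the corresponding verbal subgroup and $\wordsv(G) = \{e\}$ because $G \in \varv$, we obtain $\wordsv(A) \subseteq \ker(\phi)$. Hence, were $\pi(x) = \pi(y)$, we would have $x^{-1}y \in \wordsv(A) \subseteq \ker(\phi)$ and therefore $\phi(x) = \phi(y)$, contradicting $g \neq e$. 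Thus $\pi\restriction_X$ is injective, so $|X/\wordsv(A)| = \kappa$ and $A_\varv$ is $\varv$-free of rank $\kappa$, as required.

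The only genuine obstacle is this last step: the freeness of $A_\varv$ comes for free from Lemma~\ref{lemma verbal functor preserves free groups}, but nothing there prevents the projection from collapsing distinct basis elements, and indeed for the trivial variety $A_\varv$ is trivial and the rank drops. The heart of the matter is therefore the separation of each pair of distinct basis elements by a homomorphism into some group of $\varv$, which is exactly what a single non-trivial witness $G \in \varv$ supplies; everything else is routine bookkeeping.
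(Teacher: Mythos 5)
Your proof is correct, and the key step is handled by a genuinely different argument from the paper's. Both proofs begin identically: Lemma~\ref{lemma verbal functor preserves free groups} gives that $A_\varv$ is $\varv$-free with $\varv$-basis $X/\wordsv(A)$, so everything reduces to showing that $\pi$ does not identify distinct elements of $X$. At that point the paper argues \emph{syntactically}: if $x_1\wordsv(A)=x_2\wordsv(A)$, it writes $x_2^{-1}x_1$ as a value $w'(\bar{x})$ of a law $w'\in\wordsv$ with arguments in $X$, and uses the fact that $A$ is absolutely free on $X$ to conclude that $w'(\bar{t})$ must literally reduce to the word $t_2^{-1}t_1$; since $t_2^{-1}t_1$ as a law forces every group in the variety to be trivial, non-triviality of $\varv$ gives the contradiction. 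You argue \emph{semantically}: you separate $x$ and $y$ by a homomorphism $\phi:A\to G$ into a fixed non-trivial witness $G\in\varv$, and use $\phi(\wordsv(A))\subseteq\wordsv(G)=\{e\}$ to see that $\wordsv(A)\subseteq\ker(\phi)$, so identified basis elements would have equal images. The two arguments are contrapositives of one another in spirit (both say: collapsing basis elements forces $\varv$ to be trivial), but yours buys two things: it sidesteps the reduced-word manipulation entirely, and it avoids the point the paper's proof glosses over, namely that an arbitrary element of the verbal subgroup $\wordsv(A)$ --- a priori a product of values of laws --- can be rewritten as a single value $w(\bar{a})$ of a single law. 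The paper's version, in exchange, makes explicit \emph{which} law would have to hold, which is occasionally useful information. Either route is acceptable; yours is the cleaner one here.
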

\begin{proof}
	Let $A$ be an absolutely free group with basis $X$. By Lemma \ref{lemma verbal functor preserves free groups}, $X/\wordsv(A)$ is a $\varv$-basis of $A_\varv$. We show that in this case the natural surjection $x\mapsto x\wordsv(A)$ is actually a bijection.
	
	\smallskip\noindent
	Towards a contradiction, assume that $x_1$ and $x_2$ are elements of $X$ whose cosets $x_1\wordsv(A)$ and $x_2\wordsv(A)$ are equal. Then, there is a group word $w(z_0,\ldots,z_{n-1})\in\wordsv$ s.t. $A\models x_1=x_2w(\bar{a})$, for some $\bar{a}=(a_0,\ldots,a_{n-1})$ in $A$. Without loss of generality, we can assume $w$ to be a a word in $X$. Indeed, since $X$ is a $\varv$-basis of $A$, there is a finite tuple $\bar{x}$ in $X$ s.t. each $a_i$ admits a unique expression in $A$ as group word: $a_i=v_i(\bar{x})$. Therefore, denoted the composed word $w(v_0(\bar{t}),\ldots,v_{n-1}(\bar{t}))$ simply by $w'(\bar{t})$, we have that:
		\begin{equation}\label{eq.1 - lemma verbal functors preserve the rank of absolutely free groups}\tag{$*_3$}
			A\models x_1=x_2w'(\bar{x}).
		\end{equation}
	Notice that since $w(\bar{z})\in\wordsv$, the expression $w'(\bar{b})= w(v_0(\bar{b}),\ldots,v_{n-1}(\bar{b}))$ is trivial in $A$, for all $\bar{b}\in A^{|\bar{t}|}$. So, $w'(\bar{t})\in\wordsv$ as well.
	
	\smallskip\noindent
	Finally, from expression \eqref{eq.1 - lemma verbal functors preserve the rank of absolutely free groups}, we get that the identity $w'(\bar{x})=x_2^{-1}x_1$ holds in $A$. Since $A$ is absolutely free with basis $X$, this means that $w'(\bar{t})$ is actually equivalent to the word $t_2^{-1}t_1$. However, in every group $B$ admitting $t_2^{-1}t_1$ as law, all the elements must be equal, so that $B=\{e\}$. Since $\varv$ is supposed to be non-trivial, we have an absurd, as desired.
\end{proof}
\begin{lemma}\label{lemma if A absolutely free then Awv is isomorphic to Av}
	Let $\varv\subseteq\varw$ be varieties of groups. If $A$ is an absolutely free group, then $(A_\varw)_\varv\cong A_\varv$.
\end{lemma}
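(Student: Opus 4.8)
The plan is to bypass the functorial bookkeeping and compute both sides directly as quotients, reducing the whole statement to the third isomorphism theorem once the relevant verbal subgroups have been identified. Write $\pi\colon A\rightarrow A_\varw=A/\wordsw(A)$ for the canonical projection, and recall that by definition $(A_\varw)_\varv=A_\varw/\wordsv(A_\varw)$ and $A_\varv=A/\wordsv(A)$.

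The first step I would record is that the \emph{class} inclusion $\varv\subseteq\varw$ forces the \emph{reverse} inclusion of laws $\wordsw\subseteq\wordsv$: every group in $\varv$ lies in $\varw$, hence satisfies every law of $\varw$, so each law of $\varw$ is automatically a law of $\varv$. At the level of verbal subgroups this yields $\wordsw(A)\subseteq\wordsv(A)$, since the generating family $\{w(\bar a):w\in\wordsw,\ \bar a\in A^{|\bar z|}\}$ of $\wordsw(A)$ is a subfamily of the generators of $\wordsv(A)$ (this holds for any group $A$, not just an absolutely free one). The second, and genuinely load-bearing, step is the identity $\wordsv(A_\varw)=\wordsv(A)/\wordsw(A)$. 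This rests on the standard fact that a surjective homomorphism carries verbal subgroups onto verbal subgroups: since $\pi\bigl(w(\bar a)\bigr)=w\bigl(\pi(\bar a)\bigr)$ and $\pi$ surjective means the tuples $\pi(\bar a)$ exhaust $A_\varw$, one gets $\pi(\wordsv(A))=\wordsv(\pi(A))=\wordsv(A_\varw)$. By the usual image-of-a-subgroup formula $\pi(\wordsv(A))=\wordsv(A)\wordsw(A)/\wordsw(A)$, and because $\wordsw(A)\subseteq\wordsv(A)$ from the first step, the product $\wordsv(A)\wordsw(A)$ collapses to $\wordsv(A)$, giving exactly $\wordsv(A)/\wordsw(A)$.

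Combining the two steps, $(A_\varw)_\varv=\bigl(A/\wordsw(A)\bigr)\big/\bigl(\wordsv(A)/\wordsw(A)\bigr)$, and the third isomorphism theorem produces a canonical isomorphism of this group with $A/\wordsv(A)=A_\varv$, which is the desired conclusion.

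As for the main obstacle: in the route I have sketched there is essentially none, and in particular the hypothesis that $A$ be absolutely free is never used — the only facts invoked are the inclusion of laws and the behaviour of verbal subgroups under surjections. The absolute-freeness hypothesis becomes relevant only if one instead wishes to argue entirely within the free-basis framework of the preceding lemmas (verbal functors preserve $\varw$-free groups, and preserve the rank of absolutely free groups): there the delicate point would be to check that forming $A_\varw$ first and then passing to $\varv$ does not collapse the rank, i.e. that $(A_\varw)_\varv$ is $\varv$-free of the \emph{same} rank as $A_\varv$. I expect the direct quotient computation above to be the cleanest way to make this transparent, so I would present that and, if desired, note the rank agreement as a corollary.
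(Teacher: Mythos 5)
Your proposal is correct and follows essentially the same route as the paper's proof: both factor the quotient $A\rightarrow A_\varv$ through $A_\varw$ and reduce everything to the key identity $\pi(\wordsv(A))=\wordsv(A_\varw)$, which you justify via surjectivity and the paper verifies by double inclusion, before concluding by the (third) isomorphism theorem. Your side observation that the absolute-freeness hypothesis is never used is also accurate, as the paper's own argument likewise makes no use of it.
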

\begin{proof}
	Observe that $\wordsv(A)$ and $\wordsw(A)$ are two normal subgroups of $A$ s.t. $\wordsw(A)\leq\wordsv(A)$. So, the quotient map $\pi:A\rightarrow A_\varv$ factors as $\pi=\pi_2\circ\pi_1$, with $\pi_1:A\rightarrow A_\varw$ and $\pi_2:A_\varw\rightarrow A_\varw/\pi_1(\wordsv(A))$.
		
		\smallskip\noindent
		We claim that $\pi_1(\wordsv(A))=\wordsv(A_\varw)$. To see inclusion $\wordsv(A_\varw)\subseteq\pi_1(\wordsv(A))$, consider a word $w(z_0,\ldots,z_{n-1})\in\wordsv$ and some coset $a_i\wordsw(A)$, for all $i<n$. Since $a_i\wordsw(A)=\pi_1(a_i)$ for any $i<n$, by definition of the group structure of $A_\varw$, we have that $w(a_0\wordsw(A),\ldots,a_{n-1}\wordsw(A))=\pi_1(w(a_0,\ldots,a_{n-1}))$, witnessing that $w(a_0\wordsw(A),\ldots,a_{n-1}\wordsw(A))\in\pi_1(\wordsv(A))$ as well. For inclusion $\pi_1(\wordsv(A))\subseteq\wordsv(A_\varw)$, take a word $w(z_0,\ldots,z_{n-1})\in\wordsv$ and some $a_i\in A$, for all $i<n$. Then,
		\begin{equation*}
			\pi_1(w(a_0,\ldots,a_{n-1}))=w(a_0,\ldots,a_{n-1})\wordsw(A),
		\end{equation*} which, by the group structureof $A_\varw$, is equal to $w(a_0\wordsw(A),\ldots,a_{n-1}\wordsw(A))$. Hence, we have that $\pi_1(w(a_0,\ldots,a_{n-1}))\in\wordsv(A_\varw)$.
	
	\smallskip\noindent
	Finally, from equality $\pi_1(\wordsv(A))=\wordsv(A_\varw)$, we get that:
\begin{equation*}
	\begin{split}
		\pi(A)&=\pi_2\circ\pi_1(A)=\pi_2(A_\varw)\\
		&=A_\varw/\pi_1(\wordsv(A))=A_\varw/\wordsv(A_\varw)\\
		&=(A_\varw)_\varv,
	\end{split}
\end{equation*}
as desired.
\end{proof}
\begin{lemma}\label{lemma if V C W, then V -| of the inclusion functor}
	In the context of {\normalfont Notation \ref{definition functor W->V}}, the verbal functor $\varv:\cat(\varw)\rightarrow\cat(\varv)$ is the left adjoint of the inclusion functor of $\cat(\varv)$ in $\cat(\varw)$.
\end{lemma}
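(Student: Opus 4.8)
The plan is to produce, for every $\varw$-group $B$ and every $\varv$-group $A$, a bijection
\[
\Phi_{B,A}\colon \Hom_{\cat(\varv)}(B_\varv, A) \longrightarrow \Hom_{\cat(\varw)}(B, A),
\]
where on the right $A$ is regarded as a $\varw$-group via the inclusion functor, and then to verify that this family is natural in $A$ and $B$ in the sense recorded in the definition of adjunction. The map itself is $\Phi_{B,A}(g) = g \circ \pi_B$, where $\pi_B\colon B \to B_\varv = B/\wordsv(B)$ is the canonical projection, so that the content is entirely concentrated in showing this is a bijection, naturally.

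The one point with genuine content is a universal property of $\pi_B$: every homomorphism from $B$ into a $\varv$-group annihilates $\wordsv(B)$. Indeed, if $f\colon B \to A$ with $A \in \varv$, then for each law $w(\bar z) \in \wordsv$ and each $\bar b \in B^{|\bar z|}$ one has $f(w(\bar b)) = w(f(\bar b)) = e$, since $A$ satisfies the law $w = e$; as $\wordsv(B)$ is generated by the elements $w(\bar b)$, it follows that $\wordsv(B) \subseteq \ker f$. By the factorization theorem for quotient groups, $f$ therefore factors uniquely as $f = \hat f \circ \pi_B$ with $\hat f \in \Hom_{\cat(\varv)}(B_\varv, A)$. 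The assignment $f \mapsto \hat f$ is then a two-sided inverse to $\Phi_{B,A}$: surjectivity of $\pi_B$ forces $\widehat{g \circ \pi_B} = g$ and $\hat f \circ \pi_B = f$, so $\Phi_{B,A}$ is a bijection.

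Finally, naturality reduces to the fact that $\pi$ is a natural transformation from the identity functor to the composite of $\varv$ with the inclusion, i.e. that $\pi_B \circ g = g_\varv \circ \pi_{B'}$ for every $\varw$-morphism $g\colon B' \to B$; this is immediate from the defining formula $g_\varv(b'\wordsv(B')) = g(b')\wordsv(B)$. Combining this with the triviality of the inclusion functor on morphisms (so that $R(f) = f$), the required compatibility $\Phi_{B',A'}(f \circ h \circ g_\varv) = f \circ \Phi_{B,A}(h) \circ g$ follows by a direct computation, both sides equalling $f \circ h \circ \pi_B \circ g$. I expect no serious obstacle here: the statement is, in essence, the observation that $B_\varv$ is the universal $\varv$-quotient of $B$, and once the factorization property of the second paragraph is in hand everything else is a routine diagram chase.
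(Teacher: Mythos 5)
Your proposal is correct and follows essentially the same route as the paper: both define the bijection $\Phi_{B,A}$ as precomposition with the canonical projection $\pi_B\colon B\to B_\varv$ and then verify bijectivity and naturality by direct computation. The only cosmetic difference is in surjectivity, where you invoke the factorization theorem (any homomorphism into a $\varv$-group kills $\wordsv(B)$), while the paper exhibits the preimage of $\sigma$ explicitly as $i\circ\sigma_\varv$ with $i\colon A_\varv\cong A$ --- these are the same map, so the arguments coincide in substance.
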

\begin{proof}
	Let $\inc:\cat(\varv)\rightarrow\cat(\varw)$ be the inclusion functor of $\cat(\varv)$ in $\cat(\varw)$ (i.e. $\inc$ acts like the identity on every object and arrow in $\cat(\varv)$). By definition, $\varv$ is the left adjoint of $\inc$ if and only if, for all $B\in\varw$ and $A\in\varv$, there exists a bijection between $\Hom_{\cat(\varv)}(B_\varv, A)$ and $\Hom_{\cat(\varw)}(B, I(A))$ which is natural in both $A$ and $B$.
	
	\smallskip\noindent
	Let $\Phi_{B,A}:\Hom_{\cat(\varv)}(B_\varv, A)\rightarrow\Hom_{\cat(\varw)}(B, A)$ be the function that associates to each homomorphism $\theta:B_\varv \rightarrow A$ a map 
	\begin{equation}\tag{$*_4$}\label{definition Phi_B,A(f)}
		\Phi_{B,A}(\theta): B \rightarrow A, \quad\text{s.t.}\quad b\mapsto \theta(b\wordsv(B)).
	\end{equation}
	We show that $\Phi_{B,A}$ is as required.
	
	\smallskip\noindent
	Clearly, any $\Phi_{B,A}(\theta)$ as in \eqref{definition Phi_B,A(f)} is a well-defined function which preserves the identity. The compatibility with the group law is provided by the one of $\theta$. Indeed, for all $a,b\in B$,
	\begin{equation*}
		\begin{split}
			\Phi_{B,A}(\theta)(ab)&=\theta(ab\wordsv(B))\\
			&=\theta(a\wordsv(B)b\wordsv(B))\\
			&=\theta(a\wordsv(B))\theta(b\wordsv(B))\\
			&=\Phi_{B,A}(\theta)(a)\Phi_{B,A}(\theta)(b),
		\end{split}
	\end{equation*}
	where the second equality follows by the group structure on $B_\varv$, and the third one by the fact that $\theta$ is a homomorphism. Therefore, $\Phi_{B,A}$ is well-defined.
	\begin{2.20.1claim}\label{claim Phi bijection}
		$\Phi_{B,A}$ is a bijection.
	\end{2.20.1claim}
	\begin{proof}[{\normalfont \proofclaim}\cspace2.20.1]
		The injectivity is immediate: if $\theta,\eta:B_\varv\rightarrow A$ are homomorphisms for which $\Phi_{B,A}(\theta)=\Phi_{B,A}(\eta)$, then,
		\begin{equation*}
			\theta(b\wordsv(B))=\Phi_{B,A}(\theta)(b)=\Phi_{B,A}(\eta)(b)=\eta(b\wordsv(B)),
		\end{equation*}
		for all $b\in B$, i.e. $\theta=\eta$.
		
		\smallskip\noindent
		For the surjectivity, consider an arbitrary homomorphism $\sigma:B\rightarrow  A$. We show that $\sigma$ belongs to the image of $\Phi_{B,A}$. Let $\tilde{\sigma}_\varv\vcentcolon=i\circ \sigma_\varv$, where $i$ is the isomorphism between $A_\varv$ and $A$ given by the assignment $a\wordsv(A)\mapsto a$, for all $a\in A$ (recall that $A$ is a $\varv$-group, so $\wordsv(A)$ is trivial). Then, by definition,
		\begin{equation*}
			\begin{split}
				\Phi_{B,A}(\tilde{\sigma}_\varv)(b)&=\tilde{\sigma}_\varv(b\wordsv(B))\\
				&=i(\sigma_\varv(b\wordsv(B)))\\
				&=i(\sigma(b)\wordsv(A))=\sigma(b),
			\end{split}
		\end{equation*}
		for all $b\in B$. Therefore, $\sigma=\Phi_{B,A}(\tilde{\sigma}_\varv)$ and $\Phi_{B,A}$ is a bijection, as desired.
	\end{proof}
	\begin{2.20.2claim}\label{claim Phi natural}
		$\Phi_{B,A}$ is natural in both $A$ and $B$.
	\end{2.20.2claim}
	\begin{proof}[{\normalfont \proofclaim}\cspace2.20.2]
		We just need to show that, if $A,A'\in \varv$ and $B,B'\in \varw$, then, for all homomorphisms $\alpha:A\rightarrow A'$ and $\beta:B'\rightarrow B$, the following diagram commutes.
		\begin{equation}\tag{$*_5$}\label{eq natural transformation}
			\begin{tikzcd}
				{\Hom_{\cat(\varv)}(B_\varv,A)} \arrow[r, "{\Phi_{B,A}}", rightarrow] \arrow[d, "{\Hom_{\cat(\varv)}(\beta_\varv,\alpha)=\alpha\circ(\,\cdot \,)\circ \beta_\varv}"', rightarrow] & {\Hom_{\cat(\varw)}(B,A)} \arrow[d, "{\Hom_{\cat(\varw)}(\beta,\alpha)=\alpha\circ(\,\cdot\,)\circ \beta}", rightarrow] \\
				{\Hom_{\cat(\varv)}(B'_\varv,A')} \arrow[r, "\Phi_{B',A'}"', rightarrow] & {\Hom_{\cat(\varw)}(B',A')}
			\end{tikzcd}
		\end{equation}
		To this extent, let $\sigma:B_\varv\rightarrow A$ be a homomorphism, and $b\in B'$. Then,
		\begin{equation*}
			\begin{split}
				\Phi_{B',A'}(\alpha\circ \sigma\circ \beta_\varv)(b)&=(\alpha\circ \sigma\circ \beta_\varv)(b\wordsv(B'))\\
				&=\alpha(\sigma(\beta(b)\wordsv(B)))\\
				&=\alpha(\Phi_{B,A}(\sigma)(\beta(b)))=(\alpha\circ \Phi_{B,A}(\sigma)\circ \beta)(b).
			\end{split}
		\end{equation*}
		Therefore, \eqref{eq natural transformation} commutes, as desired.
	\end{proof}
\end{proof}
\noindent
Since left-adjoints preserve coproducts\footnote{The Coproduct of $\kappa$ elements is just a colimit over the diagram consisting of $\kappa$ objects, and left-adjoints preserve colimits (see\cspace\cite{riehl}, Theorem\cspace4.5.3, pg.\cspace138 for further details).}, a direct consequence of Lemma \ref{lemma if V C W, then V -| of the inclusion functor} is that the relation $\leq_*$ is \say{invariant} under the action of the verbal functor $\varv$. The following corollary formalizes this observation.
\begin{corollary}\label{fact verbal functors preserve free factors}
	Let $\varv\subseteq \varw$ be varieties of groups. If $A$ and $B$ are $\varw$-groups s.t. $A\leq_* B$, then $A_\varv\leq_*B_\varv$\footnote{Technically speaking, what we are claiming is that there exists an isomorphic copy $A_\varv'$ of $A_\varv$ in $B_\varv$ s.t. $A_\varv'\leq_* B_\varv$, in the sense of Definition \ref{definition V-free product}.}.
\end{corollary}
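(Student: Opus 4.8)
The plan is to recognize the relation $\leq_*$ as the statement that $B$ is a coproduct in $\cat(\varw)$, and then to transport this coproduct through the verbal functor $\varv$, exploiting the fact that $\varv$ is a left adjoint and that left adjoints preserve colimits. Concretely, by Definition~\ref{definition V-free product} (read categorically, as the coproduct universal property of $\cat(\varw)$), the hypothesis $A \leq_* B$ means that there is a complementary factor $C \leq B$ such that $B$, together with the inclusions $\iota_A : A \hookrightarrow B$ and $\iota_C : C \hookrightarrow B$, is the coproduct $A \ast_\varw C$ in $\cat(\varw)$.

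First I would invoke Lemma~\ref{lemma if V C W, then V -| of the inclusion functor}, which identifies $\varv : \cat(\varw) \to \cat(\varv)$ as the left adjoint of the inclusion functor $\inc : \cat(\varv) \to \cat(\varw)$. Since left adjoints preserve all colimits, and in particular binary coproducts, applying $\varv$ to the coproduct diagram above yields that $B_\varv$, together with the two morphisms $\varv(\iota_A) : A_\varv \to B_\varv$ and $\varv(\iota_C) : C_\varv \to B_\varv$, is a coproduct of $A_\varv$ and $C_\varv$ in $\cat(\varv)$. By the categorical reading of Definition~\ref{definition V-free product}, this is precisely the assertion that $B_\varv$ is the $\varv$-free product of the images of $A_\varv$ and $C_\varv$.

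The main obstacle — and the reason for the caveat in the footnote — is that the structure map $\varv(\iota_A)$ need not a priori be a subgroup inclusion, so one must check that it is injective before identifying $A_\varv$ with an honest $\varv$-free factor $A_\varv'$ of $B_\varv$. I would settle this with a retraction argument: the homomorphism $A \ast_\varw C \to A$ that is $\mathrm{id}_A$ on $A$ and trivial on $C$ (which exists by the diagram property) is a left inverse of $\iota_A$; applying the functor $\varv$ and using functoriality ($\varv(f \circ g) = \varv(f) \circ \varv(g)$ and $\varv(\mathrm{id}) = \mathrm{id}$) produces a left inverse of $\varv(\iota_A)$, so $\varv(\iota_A)$ is a split monomorphism and hence injective. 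Setting $A_\varv' \vcentcolon= \varv(\iota_A)(A_\varv) \cong A_\varv$ and taking $\varv(\iota_C)(C_\varv)$ as its complementary factor then gives $A_\varv' \leq_* B_\varv$, as desired. The only remaining point requiring care is verifying that the abstract coproduct object furnished by the adjunction coincides with the concrete quotient $B_\varv = B / \wordsv(B)$ together with its canonical generating subgroups, which is immediate once the structure maps are recognized as the verbal images of the original inclusions $\iota_A$ and $\iota_C$.
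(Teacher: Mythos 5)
Your proposal is correct and follows essentially the same route as the paper: the paper obtains this corollary in one line from the lemma identifying the verbal functor $\varv:\cat(\varw)\to\cat(\varv)$ as left adjoint to the inclusion functor, together with the fact that left adjoints preserve coproducts. Your additional retraction argument, showing that the image under $\varv$ of the inclusion $A\hookrightarrow B$ is a split monomorphism and hence injective, is a nice refinement that makes explicit the identification of $A_\varv$ with a genuine subgroup $A_\varv'$ of $B_\varv$, a point the paper only acknowledges in the footnote.
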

\noindent
Corollary\cspace\ref{fact verbal functors preserve free factors} provides the last technical tool we need in the proof of our main result, Theorem\cspace\ref{theorem V satisfies (CP) -> Fv(k) not homoheneous}. Nevertheless, to comprehensively conclude this section, we recall the following lemma, originally stated by Pope (see Lemma\cspace7, pg.\cspace43 of \cite{pope1984}), for which we give here a complete proof. It will reveal particularly useful in pursuing Strategy\cspace\ref{strategy} (i.e. in the application of Theorem\cspace\ref{theorem V satisfies (CP) -> Fv(k) not homoheneous} to generic varieties of groups), as it provides an easy criterion to check that a subgroup $B\leq A$ is not a $\varv$-free factor of $A\ast_\varv F_\varv(\aleph_0)$ by looking at the quotient $C=A/N_A(B)$.
\begin{lemma}\label{lemma pope}
	Let $\varv$ be a variety of groups. Suppose that $A$ and $B$ are $\varv$-free groups s.t. $B\leq A$. Then, for every group $D\in\varv$ (not necessarily $\varv$-free), denoted $C=A/N_A(B)$ and $A'=A\ast_\varv D$, there is an isomorphism $C\ast_\varv D\cong A'/N_{A'}(B)$.
\end{lemma}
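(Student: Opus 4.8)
The plan is to exhibit $C\ast_\varv D$ and $A'/N_{A'}(B)$ as the source and target of two mutually inverse homomorphisms, each assembled from the Diagram Property of a $\varv$-free product (Definition~\ref{definition V-free product}) together with the universal property of the quotient. The single fact driving everything is that $B$ is annihilated in $C$: since $B\subseteq N_A(B)$, the canonical projection $\rho:A\onto A/N_A(B)=C$ satisfies $\rho(B)=\{e\}$.

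First I would build $\bar f:A'/N_{A'}(B)\to C\ast_\varv D$. Writing $\iota:C\to C\ast_\varv D$ for the canonical inclusion of $C$, the pair $\iota\circ\rho:A\to C\ast_\varv D$ and the inclusion $D\to C\ast_\varv D$ feeds into the Diagram Property of $A'=A\ast_\varv D$ to yield a unique homomorphism $f:A'\to C\ast_\varv D$ with $f\restriction_A\,=\iota\circ\rho$ and $f\restriction_D$ the inclusion of $D$. Because $f(B)=\iota(\rho(B))=\{e\}$, we have $B\subseteq\ker f$; as $\ker f$ is normal, $N_{A'}(B)\subseteq\ker f$, so $f$ descends to $\bar f:A'/N_{A'}(B)\to C\ast_\varv D$. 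For the inverse I would reverse the roles. Let $\pi:A'\onto A'/N_{A'}(B)$ be the projection. The composite $\pi\restriction_A:A\to A'/N_{A'}(B)$ kills $B$ (since $B\subseteq N_{A'}(B)\subseteq\ker\pi$), hence kills the normal subgroup $N_A(B)$, and therefore factors as $\bar\pi\circ\rho$ for a unique $\bar\pi:C\to A'/N_{A'}(B)$. Feeding the pair $\bar\pi$ and $\pi\restriction_D$ into the Diagram Property of $C\ast_\varv D$ gives a unique homomorphism $g:C\ast_\varv D\to A'/N_{A'}(B)$ with $g\restriction_C\,=\bar\pi$ and $g\restriction_D\,=\pi\restriction_D$.

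It then remains to verify $\bar f\circ g=\mathrm{id}$ and $g\circ\bar f=\mathrm{id}$. Since $A'/N_{A'}(B)$ is generated by the images of $A$ and $D$, and $C\ast_\varv D$ by the images of $C$ and $D$, and both $\bar f,g$ are homomorphisms, it suffices to trace generators: on $D$ both composites restrict to the identity by construction, while on the image of $A$ (resp.\ $C$) the two factorizations $A\onto C$ and $A\to A'\onto A'/N_{A'}(B)$ match up, so that chasing $a\mapsto\rho(a)\mapsto\pi\restriction_A(a)$ and back returns the starting element. Uniqueness in the respective Diagram Properties then upgrades equality on generators to equality of homomorphisms.

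I expect the main obstacle to be bookkeeping rather than depth: one must apply the correct universal property at each stage — the Diagram Property of a $\varv$-free product to assemble maps \emph{out of} $A'$ and \emph{out of} $C\ast_\varv D$, versus the universal property of the quotient to factor maps that annihilate $B$ — and at each factorization confirm that the relevant normal closure ($N_A(B)$ or $N_{A'}(B)$) lies in the kernel, using only that $B$ lies in it together with normality. A cleaner equivalent route is to note that, for every $E\in\varv$, restriction along $\rho:A\onto C$ and along $\pi:A'\onto A'/N_{A'}(B)$ identifies $\Hom_{\cat(\varv)}(A'/N_{A'}(B),E)$ with $\Hom_{\cat(\varv)}(C\ast_\varv D,E)$ naturally in $E$, so the two objects represent the same functor on $\cat(\varv)$ and are therefore isomorphic.
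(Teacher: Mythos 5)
Your proposal is correct and follows essentially the same route as the paper's proof: both directions are assembled by feeding the quotient projection and the natural inclusions into the Diagram Property of the two $\varv$-free products, descending along the normal closures, and then using uniqueness in the Diagram Property to verify that the two maps are mutually inverse on generators. The only differences are cosmetic (the paper writes the map $C\to A'/N_{A'}(B)$ explicitly as $aN\mapsto aN'$ rather than factoring $\pi\restriction_A$ through $\rho$, and your closing representability remark is an optional aside).
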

\begin{proof}
	For simplicity of notation, we let $N\vcentcolon=N_A(B)$ and $N'\vcentcolon=N_{A'}(B)$. The plan is to define the desired isomorphism $\phi:C\ast_\varv D\rightarrow A'/N'$ through the Diagram Property of $\varv$-free products. To this extent, we consider the homomorphism $\gamma:C\rightarrow A'/N'$ s.t. $\gamma(aN)=aN'$ for all $aN\in C$. Since $C=A/N$ and $N\leq N'$, such a $\gamma$ is clearly well-defined. Similarly, we define a homomorphism $\delta:D\rightarrow A'/N'$ s.t. $\delta(d)=dN'$ for all $d\in D$. Notice that $\delta$ is just the composition of the natural inclusion of $D$ in $A'$ with the canonical projection $\pi':A'\rightarrow A'/N'$. So, also $\delta$ is a well-defined homomorphism. Therefore, by the Diagram Property of Definition\cspace\ref{definition V-free product}, there is a unique group homomorphism $\phi:C\ast_\varv D\rightarrow A'/N'$ s.t. $\phi\restriction_C\,=\gamma$ and $\phi\restriction_D\,=\delta$.
	
	\smallskip\noindent
	Now, let $\widetilde{\pi}:A\rightarrow C\ast_\varv D$ be the composition of the canonical projection $\pi:A\rightarrow C$ and the inclusion of $C$ in $C\ast_\varv D$, and $i_D:D\rightarrow C\ast_\varv D$ be the natural inclusion of $D$ in $C\ast_\varv D$. Then, we denote by $\psi: A'\rightarrow C\ast_\varv D$ the unique group homomorphism s.t. $\psi\restriction_A\,=\widetilde{\pi}$ and $\psi\restriction_D\,=i_D$, whose existence is ensured by the Diagram Property of $\varv$-free products. By construction, the following equality holds:
	\begin{equation}\tag{$*_6$}\label{eq.1 - lemma pope}
		\phi\circ\psi=\pi',
	\end{equation}
	with $\pi':A'\rightarrow A'/N'$ denoting as above the canonical projection of $A'$ onto $A'/N'$. Indeed, equality \eqref{eq.1 - lemma pope} is realized by the restrictions of $\phi\circ \psi$ and $\pi'$ to both $A$ and $D$, so by the Diagram Property of $\varv$-free products it holds on the whole $A'=A\ast_\varv D$.
	
	\smallskip\noindent
	Finally, since $N'\leq \mathrm{ker}(\psi)$ (cf.\cspace\cite{johnson}, Lemma\cspace1 pg.\cspace42), there is a group homomorphism $\sigma: A'/N'\rightarrow C\ast_\varv D$ such that:
	\begin{equation}\tag{$*_7$}\label{eq.2 - lemma pope}
		\sigma\circ\pi'=\psi.
	\end{equation}
	We claim that $\phi$ and $\sigma$ are mutually inverse. Indeed, conditions \eqref{eq.1 - lemma pope} and \eqref{eq.2 - lemma pope} ensure that:
	\begin{equation*}
		(\phi\circ\sigma)(a'N')=(\phi\circ\sigma)(\pi'(a'))=\phi(\psi(a'))=\pi'(a')=a'N',
	\end{equation*}
	for all $a'\in A'$, witnessing that $\phi\circ\sigma=\mathrm{id}_{A'/N'}$. Conversely, in order to prove that $\sigma\circ\phi=\mathrm{id}_{C\ast_\varv D}$, it is enough to show that $(\sigma\circ\phi)\restriction_C\,=\mathrm{id}_{C}$ and $(\sigma\circ\phi)\restriction_D\,=\mathrm{id}_{D}$. If this is the case, the Diagram Property of $\varv$-free products ensures the desired equality. So, for all $a\in A$, by construction of $\phi$, we have:
	\begin{equation*}
		(\sigma\circ\phi)(aN)=\sigma(aN')=\sigma(\pi'(a))=\psi(a)=aN,
	\end{equation*}
	where the last equality follows by condition $\psi\restriction_A\,=\widetilde{\pi}$. Similarly, from $\psi\restriction_D\,=i_D$ it follows that:
	\begin{equation*}
		(\sigma\circ\phi)(d)=\sigma(dN')=\sigma(\pi'(d))=\psi(d)=d,
	\end{equation*}
	 for all $d\in D$, as desired. Therefore, $\sigma=\phi^{-1}$ and $\phi:C\ast_\varv D\rightarrow A'/N'$ is the desired isomorphism.
\end{proof}}\fi
\section{The proof of the Main Theorem}
The construction principle, or simply $(\cp)$, is a combinatorial condition on countable free algebras characterizing those varieties $\varv$ in a countable language containing an $L_{\infty\omega_1}$-free algebra (i.e. $L_{\infty\omega_1}$-equivalent to some $\varv$-free algebra) of cardinality $\aleph_1$ which is not itself $\varv$-free. Such condition, first introduced in \cite{eklof}, has been extensively studied for varieties of groups in \cite{mekler_groups}. In the current section, we will show its relevance for the analysis of the homogeneity of $\varv$-groups of uncountable rank.
\begin{definition}\label{definition CP}
	Let $\varv$ be a variety of groups. We say that the \emph{construction principle} $(\cp)$ holds in $\varv$ if there are two $\varv$-free groups $B\leq A$ or rank $\aleph_0$ and a $\varv$-basis $Y=\{y_i:i<\omega\}$ of $B$ s.t.
	\begin{enumerate}[(1)]
		\item $B_n=\langle \{y_i:i\leq n\}\rangle_A\leq_* A*_\varv F_\varv(\aleph_0)$, for all $n<\omega$;
		\item $B$ is not a $\varv$-free factor of $A*_\varv F_\varv(\aleph_0)$.
	\end{enumerate}
\end{definition}
\noindent
Notice that, as consequence of Lemma\cspace\ref{lemma existence of V-free complementary factor} and Lemma\cspace\ref{lemma characterization of f.f.}, $(\cp)$ also provides some additional information:
\begin{enumerate}[(i)]
	\item for all $n<\omega$, $B_n$ has a $\varv$-free complementary factor in $A*_\varv F_\varv(\aleph_0)$;
	\item $B$ has no $V$-free complementary factor in $A*_\varv F_\varv(\aleph_0)$.
\end{enumerate}
In particular, observation $(\mathrm{ii})$ suggests the proof of the following result.
\begin{1.1theorem}
	Let $V$ be a variety of groups. If $(\cp)$ holds in $\varv$, then for every uncountable $\kappa$, $F_V(\kappa)$ is not $\aleph_1$-homogeneous. Furthermore, $F_V(\aleph_0)$ has an elementary subgroup which is not a $V$-free factor of $F_V(\aleph_0)$.
\end{1.1theorem}
\begin{proof}[Proof of \ref{theorem V satisfies (CP) -> Fv(k) not homoheneous}]
	For simplicity of notation, we denote $F=F_\varv(\kappa)$. Let $A$ and $B$ be $\varv$-free groups witnessing that $(\cp)$ holds in $\varv$, so that there is a $\varv$-basis $Y=\{y_i:I<\omega\}$ of $B$ s.t. $B_n=\langle \{y_i:i\leq n\}\rangle\leq_*A*_\varv F_\varv(\aleph_0)$, for all $n<\omega$. Without loss of generality, we can assume $A*_\varv F_\varv(\aleph_0)$ to embed in $F$ in the natural way. Hence, every $B_n$ is also a $\varv$-free factor of $F$, by the transitivity of relation $\leq_*$. Moreover, since $F$ has uncountable rank, Lemma\cspace\ref{lemma existence of V-free complementary factor} ensures that $A$ and every $B_n$ have some complementary factors in $F$ which are $\varv$-free. Therefore, by Lemma\cspace\ref{lemma characterization of f.f.} we can assume $X'=\{x_i:i<\kappa\}$ to be a $\varv$-basis of $F$ s.t. the initial segment $X=\{x_i:i<\omega\}$ is a $\varv$-basis of $A$.
	
	\smallskip\noindent
	Let $\bar{x}=(x_i:i<\omega)$ and $\bar{y}=(y_i:i<\omega)$ be enumerations of $X$ and $Y$, respectively. We claim that $\bar{x}$ and $\bar{y}$ have the same type in $F$, but there is no automorphism of $F$ identifying them. Indeed, we have the following.
	\begin{1.1.1claim}
		$\tp^{F}(\bar{x})=\tp^{F}(\bar{y})$.
	\end{1.1.1claim}
	\begin{proof}[\proofclaim]
		Consider a first order formula $\phi(z_0,\ldots,z_{n})$ of the language of groups $L$. Since $B_n=\langle \{y_i:i\leq n\}\rangle_A$ is a $\varv$-free factor 
		of $F$ with $\varv$-free complementary factor, by Lemma\cspace\ref{lemma characterization of f.f.}, the $\varv$-basis $\{y_i:i\leq n\}$ of $B_n$ extends to a $\varv$-basis $Z$ of $F$. Therefore, by Remark\cspace\ref{remark bijection of V-bases yields an automorphism of a V-free algebra}, any bijection $s:X'\rightarrow Z$ s.t. $s(x_i)=y_i$, for all $i\leq n$, naturally induces an automorphism $\sigma\in \mathrm{Aut}(F)$ s.t. $s\subseteq \sigma$. So, $\sigma$ witnesses that:
		\begin{equation*}
			F\models \phi(x_0,\ldots,x_{n})\quad\text{if and only if}\quad F\models\phi(y_0,\ldots,y_{n}),
		\end{equation*}	
		as desired.
	\end{proof}
	\noindent
	Towards a contradiction, suppose now that there is an automorphism $\alpha\in\mathrm{Aut}(F)$ s.t. $\alpha(\bar{x})=\bar{y}$. Then, since $A\leq_*F$, also $B=\alpha(A)\leq_*F$, as automorphisms preserve $\varv$-free factorizations. By Lemma\cspace\ref{lemma existence of V-free complementary factor}, $B$ admits a complementary factor in $F$ which is $\varv$-free of rank $\kappa$. Hence, by Lemma\cspace\ref{lemma characterization of f.f.} there is a $\varv$-basis $Y'$ of $F$ s.t. $Y\subseteq Y'$. So, we have the following decompositions:
	\begin{equation}\tag{$*_3$}\label{eq.1 - theorem V satisfies (CP) -> Fv(k) not homoheneous}
		F=B*_\varv\langle Y'\setminus Y\rangle_F=A*_\varv\langle X'\setminus X\rangle_F.
	\end{equation}
	\noindent
	Notice that both $B$ and the language $L$ are countable. So, in the expression of the elements of $B$ as reduced $L\,$-words only countably many elements of the $\varv$-basis $X'$ occur. Similarly, for the expression of those of $A$ as reduced $L\,$-words in $Y'$.  
	We will now use this observation to \say{refine} from the decompositions in \eqref{eq.1 - theorem V satisfies (CP) -> Fv(k) not homoheneous}
	a common countable $C\leq_*F$ s.t. both $A$ and $B$ are $\varv$-free factors of $C$.  
	
	\smallskip\noindent
	We proceed by induction on $n<\omega$ as follows. For $n=0$, we let $C_0\vcentcolon=A$. Suppose now that $C_n$ has been constructed, then we distinguish two cases:
	\begin{enumerate}[(i)]
		\item if $n$ is even, we let $C_{n+1}\vcentcolon=\langle Y_n \rangle_F$, and let $Y_n$ be the set of all $y\in Y'$ involved in the expression of some element of $C_n$ as reduced $Y'$-word, i.e. the set of all $y_0\in Y'$ for which there is a $c\in C_n$ and a reduced $L\,$-word $w_c(t_0,\bar{t})$ s.t. $F\models c=w_c(y_0,\bar{y})$, for some $\bar{y}\in Y'^{|\bar{t}|}$;
		\item if $n$ is odd, we let $C_{n+1}\vcentcolon=\langle X_n \rangle_F$, and, as before, we let $X_n$ be the set of all $x\in X'$ involved in the expression of some element of $C_n$ as reduced $X'$-word, i.e. the set of all $x_0\in X'$ for which there is a $c\in C_n$ and a reduced $L\,$-word $v_c(t_0,\bar{t})$ s.t. $F\models c=v_c(x_0,\bar{x})$, for some $\bar{x}\in X'^{|\bar{t}|}$.		
	\end{enumerate}
	Finally, we consider the union $C\vcentcolon=\bigcup_{n<\omega}C_n$. Since $C$ is union of the $\leq_*$-chain of $\varv$-free groups $(C_{2n+1}:n<\omega)$, it is $\varv$-free with $\varv$-basis $\widetilde{Y}\vcentcolon=\bigcup_{n<\omega}Y_{2n}$. Indeed, any element $c\in C$ already figures at a finite stage $2n+1$ of the construction, so there is a finite tuple $\bar{y}$ in $Y_{2n}$ s.t. $F\models c=w_c(\bar{y})$, for some reduced $L\,$-word $w_c(\bar{t})$. Further, this expression is unique, as $Y_{2n}$ is a subset of the $\varv$-basis $Y'$ of $F$. Similarly, $C$ is the union of the $\leq_*$-chain $(C_{2n}:n<\omega)$. So, also $\widetilde{X}\vcentcolon=\bigcup_{n<\omega}X_{2n+1}$ is a $\varv$-basis of $C$.
	
	\smallskip\noindent
	By construction, the following inclusions hold:
	\begin{equation*}
		Y\subseteq \widetilde{Y}\subseteq Y' \quad\text{and}\quad X\subseteq \widetilde{X}\subseteq X'.
	\end{equation*}
	Therefore, by Lemma\cspace\ref{lemma characterization of f.f.}, both $A$ and $B$ are $\varv$-free factors of $C$ with a $\varv$-free complementary factor. Let then $D$ be a $\varv$-free complementary factor of $A$ in $C$, so that we have:
	\begin{equation*}
		B\leq_*C\leq_*F \quad\text{and}\quad C=A*_\varv D.
	\end{equation*}
	Without loss of generality, we can assume $D$ to have rank $\aleph_0$. Indeed, $F$ is supposed to have uncountable rank, while, by construction, $C$ is a $\varv$-free factor of $F$ of countable rank. So, if $\mathrm{rank}(D)<\aleph_0$, it suffices to take a subset $Z$ of $X'\setminus\widetilde{X}$ s.t. $|Z|=\aleph_0$ and consider the $\varv$-free product $D'\vcentcolon=D*_\varv\langle Z\rangle_F$ in $F$. The resulting group still satisfies $B\leq_*A*_\varv D'\leq_*F$.
	
	\smallskip\noindent 
	Finally, by the Diagram Property of Definition\cspace\ref{definition V-free product}, $\mathrm{id}_A$ and the natural correspondence $D\rightarrow F_\varv(\aleph_0)$ uniquely determine an isomorphism $\beta:C\rightarrow A\ast_\varv F_\varv(\aleph_0)$ which fixes $B$ and witnesses that $B\leq_* \beta(A*_\varv D)=A*_\varv F_\varv(\aleph_0)$. However, this is absurd by condition \emph{(2)} of $(\cp)$.
	
	\smallskip\noindent
	It only remains to show that $F_\varv(\aleph_0)$ has an elementary subgroup which is not a $\varv$-free factor of $F_\varv(\aleph_0)$. Since $A$ has infinite countable rank, we can identify $F_\varv(\aleph_0)$ with $A$. Hence, if $B$ is an elementary substructure of $A$, then we have the thesis, since the previous argument shows that $B$ is not a $\varv$-free factor of $A$. We claim that this is the case.
	
	\smallskip\noindent
	Observe that $\tp^{A}(\bar{x})=\tp^{B}(\bar{y})$, as both $A$ and $B$ are $\varv$-free groups of the same rank, with $\varv$-free bases $\bar{x}$ and $\bar{y}$, respectively. Further, Lemma\cspace\ref{lemma f.f. implies el. substructure} shows that $A$ is elementary in $F$, since $A$ and $F$ have infinite ranks s.t. $\mathrm{rank}(A)<\mathrm{rank}(F)$, and $A$ embeds in $F$ as a $\varv$-free factor. In particular, $\tp^{A}(\bar{x})=\tp^{F}(\bar{x})$ and $\tp^A(\bar{y})=\tp^{F}(\bar{y})$. So, \emph{Claim\cspace1.1.1} completes the chain of equalities:
	\begin{equation}\tag{$*_4$}\label{eq.2 - theorem V satisfies (CP) -> Fv(k) not homoheneous}
		\tp^A(\bar{y})=\tp^{F}(\bar{y})=\tp^{F}(\bar{x})=\tp^{A}(\bar{x})=\tp^{B}(\bar{y}).
	\end{equation}
	In light of \eqref{eq.2 - theorem V satisfies (CP) -> Fv(k) not homoheneous}, the proof is straightforward. Let $\phi(z_0,\ldots,z_{n-1})$ be a formula of the language of groups, and $b_0,\ldots,b_{n-1}\in B$. Since $Y$ is a $\varv$-basis of $B$, each $b_i$ is labelled by a suitable group word $w_i(y_0,\ldots,y_{k-1})$ in $\bar{y}$. Hence, by replacing any occurrence of the $b_i$'s in $\phi$ with the corresponding group words $w_i$'s, we get:
	\begin{equation*}
		\begin{split}
			A\models\phi(b_0,\ldots,b_{n-1}) &\Leftrightarrow A\models\phi(w_0(y_0,\ldots,y_{k-1}),\ldots,w_{n-1}(y_0,\ldots,y_{k-1}))\\
			&\Leftrightarrow B\models\phi(w_0(y_0,\ldots,y_{k-1}),\ldots,w_{n-1}(y_0,\ldots,y_{k-1}))\\
			&\Leftrightarrow B\models\phi(b_0,\ldots,b_{n-1}),
		\end{split}		
	\end{equation*}
	where the second equivalence follows from \eqref{eq.2 - theorem V satisfies (CP) -> Fv(k) not homoheneous} and the fact that $\phi(w_0(y_0,\ldots,y_{k-1}),$\\$\ldots,w_{n-1}(y_0,\ldots,y_{k-1}))$ is a formula involving only terms of $\bar{y}$. Therefore, $B$ is an elementary substructure of $A$, as desired.
\end{proof}
\section{Applications}
In the previous section, we proved that in any variety $\varv$ satisfying $(\cp)$ every $\varv$-free group of uncountable rank is not $\aleph_1$-homogeneous. We now outline a method (i.e. Strategy\cspace\ref{strategy}) to show in practice that a given variety of groups satisfies $(\cp)$. To this extent, the main tool is provided by the following result, due to Mekler (see Lemma\cspace7, pg.\cspace135 of\cspace\cite{mekler_groups}).
\begin{theorem}[Mekler]\label{application theorem}
	Suppose that there are $A, B, Y$ and $\varv$ such that:
	\begin{enumerate}[(1)]
		\if{$\varv$ is a non-trivial variety of groups (i.e. $\varv$ contains a non-trivial group);}\fi
		\item $B\leq A$ are absolutely free groups of rank $\aleph_0$; 
		\item $Y = \{y_i : i < \omega \}$ is a basis of $B$;
		\item for every $n < \omega$, $B_n = \langle \{y_i : i \leq n \}\rangle_B \leq_* A$ $($where $\leq_*$ denotes \say{free factor}\,$)$;
		\item letting $C = A/N_{A}(B)$, we have that $C_V*_\varv F_\varv(\aleph_0)$ is not $V$-free.
	\end{enumerate}
	Then, any variety $\varw \supseteq \varv$ satisfies $(\cp)$.
\end{theorem}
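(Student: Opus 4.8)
The plan is to transport the absolutely free data $A,B,Y$ down to $\varw$ along the verbal functor $\varw\colon\cat(\varg)\to\cat(\varw)$ of Notation~\ref{definition functor W->V} (note $\varv\subseteq\varw\subseteq\varg$), and to propose $A_\varw$, $B_\varw$ and $Y_\varw:=\{y_i\wordsw(A):i<\omega\}$ as the witnesses of $(\cp)$ in $\varw$. First I would record that this functor preserves $\varw$-freeness and, on absolutely free groups, the rank, so that $A_\varw$ and $B_\varw$ are $\varw$-free of rank $\aleph_0$ with $Y_\varw$ a $\varw$-basis of $B_\varw$. The point needing care is that $B_\varw$ embeds in $A_\varw$ with this basis: writing $B=\bigcup_n B_n$, hypothesis~\emph{(3)} gives $B_n\leq_* A$, and a free factor is a retract, so a retraction sends $\wordsw(A)$ into $\wordsw(B_n)$ and fixes $B_n$; hence $B_n\cap\wordsw(A)=\wordsw(B_n)$, and taking unions $B\cap\wordsw(A)=\wordsw(B)$, so the image of $B$ in $A_\varw$ is indeed $B_\varw$. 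Since the verbal functor is a left adjoint it preserves coproducts and hence the relation $\leq_*$; thus $(B_\varw)_n=(B_n)_\varw\leq_* A_\varw\leq_* A_\varw\ast_\varw F_\varw(\aleph_0)$ for every $n$, which is clause~\emph{(1)} of Definition~\ref{definition CP}.

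The heart of the matter is clause~\emph{(2)}: that $B_\varw$ is \emph{not} a $\varw$-free factor of $A_\varw\ast_\varw F_\varw(\aleph_0)$. Suppose, toward a contradiction, that $B_\varw\leq_* A_\varw\ast_\varw F_\varw(\aleph_0)$. Because $B_\varw\leq A_\varw$ while the $F_\varw(\aleph_0)$-factor contributes $\aleph_0$ further basis elements, Lemma~\ref{lemma existence of V-free complementary factor} applies and produces a $\varw$-free complementary factor of $B_\varw$; consequently the quotient of $A_\varw\ast_\varw F_\varw(\aleph_0)$ by the normal closure of $B_\varw$ is again $\varw$-free. On the other hand, Pope's isomorphism (cf.~\cite{pope1984}) applied in $\varw$ identifies this quotient with $C_\varw\ast_\varw F_\varw(\aleph_0)$, where $C_\varw:=A_\varw/N_{A_\varw}(B_\varw)$. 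Computing normal closures and verbal subgroups,
\begin{equation*}
	A_\varw/N_{A_\varw}(B_\varw)=A/\big(N_A(B)\,\wordsw(A)\big)=(A/N_A(B))_\varw=C_\varw,
\end{equation*}
so that $C_\varw\ast_\varw F_\varw(\aleph_0)$ is $\varw$-free.

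It then remains to descend from $\varw$ to $\varv$. Applying the verbal functor $\varv\colon\cat(\varw)\to\cat(\varv)$ (again Notation~\ref{definition functor W->V}), which preserves $\varv$-freeness and coproducts, shows that $(C_\varw)_\varv\ast_\varv(F_\varw(\aleph_0))_\varv$ is $\varv$-free. Since $\varv\subseteq\varw$ gives $\wordsw(M)\leq\wordsv(M)$ for every group $M$, the two verbal functors compose, whence $(C_\varw)_\varv\cong C_\varv$; and as $F_\varw(\aleph_0)\cong(F(\aleph_0))_\varw$ for $F(\aleph_0)$ absolutely free, the same composition gives $(F_\varw(\aleph_0))_\varv\cong F_\varv(\aleph_0)$. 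Therefore $C_\varv\ast_\varv F_\varv(\aleph_0)$ would be $\varv$-free, contradicting hypothesis~\emph{(4)}. This yields clause~\emph{(2)}, so $A_\varw,B_\varw,Y_\varw$ witness $(\cp)$ in $\varw$. I expect the main obstacle to be exactly this reduction of~\emph{(2)} to~\emph{(4)}: one must route the hypothetical $\varw$-free factorization through Pope's isomorphism and then through the second verbal functor, bookkeeping normal closures and verbal subgroups carefully enough that the resulting object is precisely $C_\varv\ast_\varv F_\varv(\aleph_0)$.
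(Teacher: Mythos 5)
Your argument is correct and is essentially the proof the paper has in mind (the paper states this as Mekler's result and its own worked-out argument follows exactly this route): push $A,B,Y$ through the verbal functor, get clause \emph{(1)} of $(\cp)$ from preservation of coproducts by the left adjoint, and derive clause \emph{(2)} by contradiction via the quotient-by-a-free-factor lemma and Pope's isomorphism, reducing to hypothesis \emph{(4)}. The only (harmless) differences are that you perform the Pope/quotient step in $\varw$ and then descend to $\varv$ rather than descending first, and that you supply the verification $B\cap\wordsw(A)=\wordsw(B)$ via retractions, a point the paper passes over by ``identifying $Y$ with its image.''
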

\if{Intuitively, the proof of Theorem\cspace\ref{application theorem} can be read as follows. Assumption\cspace\emph{(3)} provides the countable {\color{purple}tuple we shall compare with a basis $X$ of $A$ (more properly, we will study the images of $X$ and $Y$ through the verbal functor $\varw:\mathrm{Cat}(\mathrm{Grp})\rightarrow\mathrm{Cat}(\varw)$).} Assumption\cspace\emph{(4)} ensures that they have the same type in $F_\varw(\kappa)$, since the truth of a given first order formula can be checked at a finite stage of the decomposition of $B_\varw$ as an increasing $\leq_*$-chain $\{(B_n)_\varw:n<\omega\}$. Finally, condition\cspace\emph{(5)} gives the absurd, showing that this process of comparison fails in the limit.}\fi
\if{\begin{proof}
	Let $A, B, Y, V$ as in \emph{(1)-(3)}, and $X$ be a basis of $A$. In light of Lemma\cspace\ref{lemma verbal functor preserves free groups} and Lemma\cspace\ref{lemma verbal functors preserve the rank of absolutely free groups}, it is not restrictive to identify $X$ and $Y$ with their images in $A_\varw$ and $B_\varw$, respectively. So, up to isomorphisms we have that $B_\varw\leq A_\varw$ and $B_\varw=\bigcup_{n<\omega}(B_n)_\varw$ for any variety $\varw\supseteq \varv$ as above. Moreover, by Corollary\cspace\ref{fact verbal functors preserve free factors} assumption\cspace\emph{(4)} ensures that condition\cspace\emph{(1)} of $(\cp)$ (cf. Definition\cspace\ref{definition CP}) is satisfied by $A_\varw, B_\varw$ and $Y$.
	
	\smallskip\noindent
	Towards a contradiction, suppose now that:
	\begin{equation}\tag{$*_{10}$}\label{eq.1 - application theorem}
		B_\varw\leq_*A_\varw\ast_\varw F_\varw(\aleph_0).
	\end{equation}
	Since $\varv\subseteq\varw$, the verbal functor $\varv:\text{Cat}(\varw)\rightarrow\text{Cat}(\varv)$ from Notation\cspace\ref{definition functor W->V} is well-defined. Up to isomorphisms, it maps decomposition \eqref{eq.1 - application theorem} in $B_\varv\leq_*A_\varv\ast_\varv F_\varv(\aleph_0)$. By Lemma\cspace\ref{lemma existence of V-free complementary factor}, $B_\varv$  has a $\varv$-free complementary factor in $A_\varv\ast_\varv F_\varv(\aleph_0)$.	Hence, denoted $A'\vcentcolon=A_\varv\ast_\varv F_\varv(\aleph_0)$, the quotient $A'/N_{A'}(B_\varv)$ is $\varv$-free, by Lemma\cspace\ref{lemma quotient of V-free factor is V-free}. However, by Lemma\cspace\ref{lemma pope} there is an isomorphism:
	\begin{equation*}
		A'/N_{A'}(B)\cong C_\varv\ast_\varv F_\varv(\aleph_0),
	\end{equation*}
	which is absurd, as $C_\varv\ast_\varv F_\varv(\aleph_0)$ is supposed to be not $\varv$-free. Therefore, $\varw$ satisfies also condition\cspace\emph{(2)} of $(\cp)$, as desired.
\end{proof}}\fi
\noindent
Theorem\cspace\ref{application theorem} is a very compact machinery: it takes as an input a tuple $(A,B,Y,\varv)$ satisfying assumptions \emph{(1)-(5)}, and \say{produces} as output, via Theorem\cspace\ref{theorem V satisfies (CP) -> Fv(k) not homoheneous}, the non-$\aleph_1$-homogeneity of $F_\varw(\kappa)$, for any variety $\varw$ extending $\varv$, and all uncountable $\kappa$'s.

\smallskip\noindent
Since any free basis uniquely determines the corresponding free group, the only data we really need are $Y$, $\varv$ and a set $X$ playing the role of basis of $A$. Thus, in the rest of this section we will provide a supply of such triples.
As a result, we will be able to prove Theorem\cspace\ref{main_cor}.

\begin{definition}\label{definition torsion-free variety}
	A variety of groups $\varv$ is said to be:
	\begin{enumerate}[(1)]
		\item \emph{torsion-free} if and only if $x^n=e$ is not a law of $\varv$, for all $0<n<\omega$;
		\item \emph{non-nilpotent} if and only $\varv$ contains a group which is not nilpotent;
		\item \emph{locally nilpotent} if and only if every finitely generated $\varv$-group is nilpotent.
	\end{enumerate}
\end{definition}
\noindent
Definition\cspace\ref{definition torsion-free variety}\cspace\emph{(1)} is equivalent to saying that $\varab\subseteq\varv$ (cf. \cite{pope1984}), i.e. to $\varv$ having $\mathbb{Z}$ as the $\varv$-free group of rank $1$. As remarked in \cite{bele}, \mbox{examples of torsion-free varieties are:}
\begin{enumerate}[$\bullet$]
	\item[$\bullet$] the variety of all groups $\varg$;
	\item the abelian variety $\varab$;
	\item the variety of solvable groups of a given class;
	\item the variety of (poly-)nilpotent groups of a given length.
\end{enumerate}
\begin{strategy}\label{strategy}
	Our \emph{modus operandi} will be the following:
	\begin{enumerate}[(a)]
		\item consider a target variety $\varv$;
		\item fix an infinite set of generators $X=\{x_i\,:\,i<\omega\}$, which will act as basis of an absolutely free group $A$;
		\item select a set $Y=\{y_i\,:\,i<\omega\}$ whose elements are words in $X$ of the form
		\begin{equation*}
			y_i=x_{ki}w(x_{ki+1},\ldots,x_{ki+h}),\quad\text{or}\quad y_i=x_{ki}^{-1}w(x_{ki+1},\ldots,x_{ki+h}),
		\end{equation*}
		for a suitable group word $w(z_1,\ldots,z_h)\in L$, and some given values $k$ and $h$ $($depending on $\varv)$ s.t. $0< k,h<\omega$;
		\item define $B$ as the subgroup of $A$ generated by $Y$;
		\item transport the quotient $C=A/N_A(B)$ in $\varv$ by the corresponding verbal functor, and show that $C_\varv\ast_\varv F_\varv(\aleph_0)$ is not $\varv$-free.
	\end{enumerate}
\end{strategy}
Intuitively, \emph{(c)} says that the $y_i$'s are defined by an iterated coding in terms of the $x_i$'s. We can think to $h$ as an index of the complexity of such coding, and to $k$ as the pitch of the iteration.

\smallskip\noindent
For any choice of the word $w(z_1,\ldots,z_h)$ in \emph{(c)}, conditions \emph{(2)-(4)} of Theorem\cspace\ref{application theorem} are automatically satisfied. Indeed, by a classic result of Nielsen and Schreier (see\cspace\cite{lyndon-schupp}, Prop.\cspace2.11, pg.\cspace8), every subgroup of an absolutely free group is itself absolutely free. Furthermore, absolutely free groups of finite rank are known to be Hopfian (see\cspace\cite{lyndon-schupp}, Prop.\cspace3.5, pg.\cspace14). Where, a group $A$ is said to be \emph{Hopfian} if every $\alpha\in \text{\normalfont End}(A)$ which is onto is an automorphism of $A$. This property allows us to show the following.
\begin{lemma}\label{lemma H_n f.f. K}
	Under assumptions (b)-(c) of \ref{strategy}, $B_n=\langle\{ y_i\,:\,i\leq n\}\rangle_B\leq_* A$, for all $n<\omega$.
\end{lemma}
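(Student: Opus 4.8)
The plan is, for each fixed $n<\omega$, to produce a basis of a suitable finite-rank free factor of $A$ that contains $\{y_0,\dots,y_n\}$ as a subset, and then to conclude by Lemma~\ref{lemma characterization of f.f.} together with the transitivity of $\leq_*$. Write $\epsilon_i\in\{+1,-1\}$ and $w_i\vcentcolon=w(x_{ki+1},\dots,x_{ki+h})$, so that $y_i=x_{ki}^{\epsilon_i}w_i$ for every $i$. Since $y_0,\dots,y_n$ involve only the generators $x_0,\dots,x_{kn+h}$, I would work inside the finitely generated free group $F_n\vcentcolon=\langle x_0,\dots,x_{kn+h}\rangle_A$, which is a free factor of $A$ by Lemma~\ref{lemma characterization of f.f.} (its basis is an initial segment of the $\varg$-basis $X$ of $A$).

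The core of the argument is to show that
\[
	Z_n\vcentcolon=\{y_0,\dots,y_n\}\cup\{x_j: 0\le j\le kn+h,\ j\notin\{0,k,\dots,kn\}\}
\]
is a basis of $F_n$. First I would note that the cardinalities match: $Z_n$ has $(n+1)+(kn+h-n)=kn+h+1=\mathrm{rank}(F_n)$ elements. I then consider the endomorphism $\phi\in\mathrm{End}(F_n)$ determined by $\phi(x_{ki})=y_i$ for $i\le n$ and $\phi(x_j)=x_j$ for every non-leading index $j$, whose image is precisely $\langle Z_n\rangle_{F_n}$. The whole point is to prove that $\phi$ is surjective: once this is done, the Hopfian property of the finite-rank free group $F_n$ forces $\phi$ to be an automorphism, and hence $Z_n=\phi(\{x_0,\dots,x_{kn+h}\})$ is a basis of $F_n$.

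To establish surjectivity it suffices to recover each leading generator $x_{ki}$ inside $\langle Z_n\rangle_{F_n}$, which I would do by downward induction on $i$ from $n$ to $0$. From $y_i=x_{ki}^{\epsilon_i}w_i$ one reads off $x_{ki}=(y_iw_i^{-1})^{\epsilon_i}$, so it is enough to express $w_i$ in terms of $Z_n$. But $w_i$ involves only $x_{ki+1},\dots,x_{ki+h}$, and each such generator is either non-leading --- hence an element of $Z_n$ directly --- or equals a leading generator $x_{kj}$ with $j>i$, which has already been expressed in terms of $Z_n$ at an earlier stage of the induction. This is exactly the place where care is required when $h\ge k$, since then the tail $w_i$ can genuinely contain later leading generators; the downward ordering is what makes the recursion close. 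I therefore expect this surjectivity/recursion step to be the main obstacle, the rest being bookkeeping.

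Finally, once $Z_n$ is known to be a basis of $F_n$, the subset $\{y_0,\dots,y_n\}$ is part of a basis of $F_n$, so Lemma~\ref{lemma characterization of f.f.} yields $B_n=\langle\{y_i:i\le n\}\rangle_A\leq_* F_n$; combining this with $F_n\leq_* A$ and the transitivity of $\leq_*$ gives $B_n\leq_* A$, as desired.
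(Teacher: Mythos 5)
Your proposal is correct and follows essentially the same route as the paper: the same extended set $Z_n$ (the $y_i$'s together with the non-leading generators up to index $kn+h$), the same surjective endomorphism of the finite-rank free factor $\langle x_0,\dots,x_{kn+h}\rangle_A$ made into an automorphism via Hopficity, and the same downward substitution to recover the leading generators $x_{ki}$. The only cosmetic differences are that you treat both signs $\epsilon_i=\pm 1$ uniformly and conclude by transitivity of $\leq_*$ rather than by extending the automorphism to all of $A$ via Lemma~\ref{lemma extension automorphisms of coproducts}; both are sound.
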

\noindent
Finally, this means also that $Y$ is a $\varv$-basis of $B$. We can see it directly from the proof of Lemma\cspace\ref{lemma H_n f.f. K}, or taking the statement above as black-box and applying Lemma\cspace\ref{lemma existence of V-free complementary factor} and Lemma\cspace\ref{lemma characterization of f.f.}. In both cases, we have that $Y$ is union of an increasing chain $(Y_n:n<\omega)$ s.t. $Y_n\vcentcolon=\{y_i:i\leq n\}$ is a $\varv$-basis of $B_n$, for all $n<\omega$. Since $B=\bigcup_{n<\omega}B_n$, every $b\in B$ belongs to some $B_n$. So, there exists a unique reduced group word $u(\bar{z})$ s.t. $B_n\models b=u(\bar{y})$, for some $\bar{y}\in Y_n^{|\bar{z}|}$. Clearly, the same must be true for every $B_m$ with $m\geq n$, as $Y_m$ is $\varv$-basis extending $Y_n$. Therefore, if there were a different reduced group word $u'(\bar{z}')$ s.t. $B\models b=u(\bar{y})\wedge b=u'(\bar{y}')$ for some $\bar{y}'\in Y^{|\bar{z}'|}$, there would be an $m<\omega$ s.t. $n\leq m$ and $B_m\models b=u(\bar{y})\wedge b=u'(\bar{y}')$ as well, since in $\bar{y}'$ only finitely many elements of $Y$ occur. However, this is absurd, so $Y$ is a $\varv$-basis of $B$, as desired.

\smallskip\noindent
For ease of reading, we consider a simple (but crucial) example before proceeding with the general proof of Lemma\cspace\ref{lemma H_n f.f. K}.
\begin{example}\label{example H_n f.f. K}
	Let $y_i=x_ix_{i+1}^{-2}$, for all $i<\omega$. Then, for any $n<\omega$, $B_n=\langle\{ y_i\,:\,i\leq n\}\rangle_B\leq_* A$. Notice that this is an instance of Lemma\cspace\ref{lemma H_n f.f. K} with $k,h=1$ and $w(z)=z^{-2}$.
\end{example}
\begin{proof}[Proof of\cspace\ref{example H_n f.f. K}]
	Clearly, given any $n<\omega$, $A_{n+1}\vcentcolon=\langle \{x_i:i\leq n+1\}\rangle_A$ is absolutely free, with free basis $\{x_i:i\leq n+1\}$. So, if $Z\vcentcolon=\{y_i:i\leq n\}\cup\{x_{n+1}\}$ is a generating set for $A_{n+1}$, then it is a free basis as well, since, by the Universal Property, the mapping $f:\{x_i:i\leq n+1\}\rightarrow Z$ s.t. $f(x_i)=y_i$ for all $i\leq n$, and $f(x_{n+1})=x_{n+1}$, extends to a surjective endomorphism of $A_{n+1}$, which is an automorphism by the Hopfianity of $A_{n+1}$. We show that this is the case.
	
	\smallskip\noindent
	From the relation $y_n=x_nx_{n+1}^{-2}$, we get $x_n=y_nx_{n+1}^2$, witnessing that $x_n\in\langle Z\rangle_{A_{n+1}}$. Hence, $x_{n-1}=y_{n-1}x_n^2=y_{n-1}(y_nx_{n+1}^2)^2$, and $x_{n-1}\in\langle Z\rangle_{A_{n+1}}$ as well. Going on in this way, it is easy to see that all $x_0,\ldots,x_n$ can be expressed as words in $Z$, as desired.
	
	\smallskip\noindent
	Since $A_{n+1}\leq_*A$, by Lemma\cspace\ref{lemma extension automorphisms of coproducts}, the automorphism $\phi'\in\mathrm{Aut}(A_{n+1})$ induced by the assignment $f:\{x_i:i\leq n+1\}\rightarrow Z$ naturally extends to an automorphism $\phi\in\mathrm{Aut}(A)$. Clearly, $\phi$ maps $\varv$-bases in $\varv$-bases. So, the inclusion $Z\subseteq\phi(X)$ witnesses that also $B_n\leq_* A$, by direction $[2\Rightarrow1]$ of Lemma\cspace\ref{lemma characterization of f.f.}.
\end{proof}
\noindent
The idea behind the argument above is that it does not matter the complexity of the coding word $w$ (or the pitch $k$) we choose: as long as the coding is performed as in \emph{(c)}, it is always possible to stop the iteration at any finite stage $n$ and find a minimal independent set $Z$ which extends $\{y_i:i\leq n\}$ and makes the following system solvable in $Z$:
\begin{equation}\tag{$*_{5}$}\label{eq lemma solutions of finite factorizations}
		\begin{cases}
			x_{kn}&=y_nw(x_{kn+1},\ldots,x_{kn+h})^{-1}\\
			x_{k(n-1)}&=y_{(n-1)}w(x_{k(n-1)+1},\ldots,x_{k(n-1)+h})^{-1}\\
			&\vdots\\
			x_0&=y_0w(x_{1},\ldots,x_{h})^{-1}.
		\end{cases}
\end{equation}
\begin{proof}[Proof of\cspace\ref{lemma H_n f.f. K}]
We consider the case of $y_i=x_{ki}w(x_{ki+1},\ldots,x_{ki+h})$, with $i<\omega$. The case of $y_i=x_{ki}^{-1}w(x_{ki+1},\ldots,x_{ki+h})$ follows, mutatis mutandis, by the same argument.

\smallskip\noindent
In light of Lemma\cspace\ref{lemma characterization of f.f.}, it suffices to show that $\{y_i\,:\,i\leq n\}$ extends to a basis of $A$. To this extent, we let:
\begin{equation*}
	Z\vcentcolon=\{y_i\,:\,i\leq n\}\cup\left(\,\bigcup_{i<n}\{x_{ki+1},\ldots,x_{k(i+1)-1}\}\right)\cup\{x_{kn+1},\ldots,x_{kn+h}\}.
\end{equation*}
We claim that $Z$ is a generating set for $A_{kn+h}\vcentcolon=\langle \{x_i\,:\,i\leq kn+h\}\rangle_A$. If this is the case, then the function $f:\{x_i:i\leq kn+h\}\rightarrow Z$ which maps $f(x_{ki})=y_i$, for all $i\leq n$, and which is the identity elsewhere, naturally extends to an endomorphism $\phi'$ of $A_{kn+h}$. Since $|Z|=kn+h+1$, $\phi'$ is surjective. So, it is an automorphism, because $A_{kn+h}$ is Hopfian. 

\smallskip\noindent
Now, as witnessed by $\phi'$, $Z$ itself is a free basis of $A_{kn+h}$. Hence, by direction $[2\Rightarrow 1]$ of Lemma\cspace\ref{lemma characterization of f.f.}, $B_n=\langle\{ y_i\,:\,i\leq n\}\rangle_B\leq_* A_{kn+h}$, as $\{y_i:i\leq n\}\subseteq Z$. 
Further, $A_{kn+h}$ is clearly a free factor of $A$, by definition. So, by Lemma\cspace\ref{lemma extension automorphisms of coproducts} applied to $A_{kn+h}\leq_*A$, there is some $\phi\in\mathrm{Aut}(A)$ s.t. $\phi'\subseteq\phi$. Since $\phi$ maps $\varv$-bases in $\varv$-bases, the inclusion $Z\subseteq\phi(X)$ shows that also $B_n=\langle\{ y_i\,:\,i\leq n\}\rangle_B\leq_* A$, again by direction $[2\Rightarrow1]$ of Lemma\cspace\ref{lemma characterization of f.f.}.

\smallskip\noindent
It only remains to see that $Z$ is a generating set of $A_{kn+h}$. The only elements of $\{x_i : i\leq kn+h\}$ which do not already belong to $Z$ are those of the form $x_{ki}$, for all $i\leq n$. We show that they admit expression as words in $Z$.

\smallskip\noindent
If $h<k$, it suffices to invert the defining relation of the $y_i$'s, obtaining the system \eqref{eq lemma solutions of finite factorizations}. Since $y_i,x_{ki+1},\ldots,x_{ki+h}\in Z$, for all $i\leq n$, we have the thesis.

\smallskip\noindent
If $k\leq h$, the first equation of \eqref{eq lemma solutions of finite factorizations} witnesses that $x_{kn}\in\langle Z\rangle_{A_{kn+h}}$. Hence, a substitution in the second equation of \eqref{eq lemma solutions of finite factorizations} of each occurrence of $x_{kn}$ with the term $y_nw(x_{kn+1},\ldots,x_{kn+h})^{-1}$ gives an expression of $x_{k(n-1)}$ as word in $Z$, so that $x_{k(n-1)}\in \langle Z\rangle_{A_{kn+h}}$ as well. Proceeding in this way, it is possible to solve any equation of system \eqref{eq lemma solutions of finite factorizations} by replacing all the occurrences of the $x_{k(i+1)}$'s with the terms in $Z$ we have obtained by the previous equations. In the end, all the $x_{ki}$'s will belong to $\langle Z \rangle_{A_{kn+h}}$, as desired.
\end{proof}
\noindent
The only delicate point of our strategy will be ensuring that $C_\varv\ast_\varv F_\varv(\aleph_0)$ is not $\varv$-free. If this is the case, Theorem\cspace\ref{application theorem} applies to a triple $(X,Y,\varv)$ as above.
\begin{1.2theorem}
	Let $V$ be a variety of groups satisfying at least one the following:
	\begin{enumerate}[(1)]
		\item $V$ is torsion-free;
		\item $V$ is not nilpotent but locally nilpotent;
		\item $V$ contains a finite non-nilponent group.
	\end{enumerate}
	Then, for every $\kappa \geq \aleph_1$ we have that $F_V(\kappa)$ is not $\aleph_1$-homogeneous. Furthermore, $F_V(\aleph_0)$ has an elementary subgroup which is not a $V$-free factor of $F_V(\aleph_0)$. 
\end{1.2theorem}
\begin{proof}[Proof of\cspace\ref{main_cor}]
	Let $A$ be the absolutely free group generated by a set $X=\{x_i\,:\,i<\omega\}$, and $B$ be the subgroup of $A$ freely generated by $Y=\{y_i:i<\omega\}$, where, for all $i<\omega$, we assume:
	\begin{enumerate}[$\bullet$]
		\item $y_i\vcentcolon=x_ix_{i+1}^{-2}$ in Case\,\emph{(1)}\,;
		\item $y_{i}\vcentcolon=x_{2i}^{-1}[x_{2i+1},x_{2i+2}]$ in Cases\cspace\emph{(2)} and\;\emph{(3)}.
	\end{enumerate}
	For each of \emph{(1)-(3)}, we claim that Theorem\cspace\ref{application theorem} applies to the tuple $(A,B,Y,\varv)$.
		
	\smallskip\noindent
	Notice that in each case the assumptions of Lemma\cspace\ref{lemma H_n f.f. K} are satisfied, so $B_n=\langle\{y_i:i\leq n\}\rangle_B\leq_* A$, for all $n<\omega$. In light of the previous discussion, the only condition we need to check is that the product $C_\varv\ast_\varv F_\varv(\aleph_0)$ is not $\varv$-free, with $C\vcentcolon= A/N_A(B)$.
	
	\smallskip\noindent
	The last two cases follow directly from an application of Mekler's results in\cspace\cite{mekler_groups} to our new framework: Lemma\cspace12 for $\varv$ not nilpotent but locally nilpotent (Case\,\emph{(2)}), and Lemma\cspace16 for $\varv$ containing a finite non-nilpotent group (Case\,\emph{(3)}).
	
	\smallskip\noindent
	For Case\,\emph{(1)}, i.e. for $\varv$ being torsion-free, necessarily $\varab\subseteq\varv$.
	So, in order to apply Theorem\cspace\ref{application theorem} to the pair $\varab\subseteq\varv$, it suffices to show that $C_\varab\oplus F_\varab(\aleph_0)$ is not free abelian.
	
	\smallskip\noindent
	In particular, we claim that $C_\varab$ is not contained in any free abelian group. To this extent, we denote by $Q$ the additive group of fractions $\big\langle 1/{2^i} \,:\, {i<\omega} \big\rangle_\mathbb{Q}$, and by $\hat{c}$ the coset of $c$ in $C_\varab$, for any $c\in C$. 
	
	\smallskip\noindent
	Observe that the quotient $C=A/N_A(B)$ has presentation:
	\begin{equation*}
		C=\langle x_i \,\vert\, x_i=x_{i+1}^2\,;\, i<\omega \rangle,
	\end{equation*}
	as we assumed $B\vcentcolon=\langle Y\rangle_A$, and $y_i\vcentcolon=x_ix_{i+1}^{-2}$ for any $i<\omega$. Hence, for all $i<\omega$, the relation $\hat{x}_i=\hat{x}_{i+1}^2$ holds in $C_\varab$. So, the assignment $\hat{x}_i\mapsto 1/2^i$ naturally extends to an isomorphism from $C_\varab$ onto $Q$, since the relation ${1}/{2^i}=2\cdot{1}/{2^{(i+1)}}$ holds in $Q$, for all $i<\omega$. However, $Q$ is $2$-divisible. Therefore, $C_\varab$ is not contained in any free abelian group, as desired.
\end{proof}

\end{document}